 \newtheorem{thm}{Theorem}[section]
 \newtheorem{cor}[thm]{Corollary}
 \newtheorem{lem}[thm]{Lemma}
 \newtheorem{claim}[thm]{Claim}
 \newtheorem{defn}[thm]{Definition}
 \newtheorem{rem}[thm]{Remark}
 \numberwithin{equation}{section}
 \newcommand \be     {\begin{equation}}
\newcommand \ee     {\end{equation}}
\newcommand \Mcal   {\mathcal M}
 \newcommand {\RR}{\mathbb{R}}
 \newcommand {\RP}{\mathbb{R}_+}
 \newcommand \Rn    {\mathbb{R}^n}
 \newcommand {\Phim} {\Phi_\eta}
 \newcommand {\ueps}{u^\varepsilon}
 \newcommand {\uupk}{u^{(k)}}
 \newcommand {\uzupk}{u_0^{(k)}}
  \newcommand{\eps}{\varepsilon}
 \newcommand{\Geps}{G_\eps}
 \newcommand {\Thetam}{\Theta_\eta}
 \newcommand{\set}[1]{\left\{#1\right\}}
\begin{document}

\title[ROUGH SOLUTIONS VISCOUS CONSERVATION LAWS]
 { VISCOUS CONSERVATION LAWS in 1D with MEASURE INITIAL DATA}

\author{ Miriam Bank }
\address{ Miriam Bank:Azrieli College of Engineering, Jerusalem 91035, Israel}
\author{ Matania Ben-Artzi}
\address{ Matania Ben-Artzi:Institute of Mathematics, Hebrew University, Jerusalem
91904, Israel}
 \author{ Maria E. Schonbek }
 \address{ Maria E. Schonbek:Department of Mathematics, UC Santa Cruz, Santa Cruz,
CA 95064, USA}



\thanks{We are grateful to Prof. M. Slemrod for many useful discussions.}


\subjclass[2010]{Primary 35K15; Secondary 35K59}

\keywords{scalar conservation law, viscosity, measure initial data, p-condition, sup-norm estimates,decay estimates}

\date{\today}




\begin{abstract} The one-dimensional viscous conservation law is considered on the whole line
$$
      u_t + f(u)_x=\eps u_{xx},\quad (x,t)\in\RR\times\overline{\RP},\quad
   \eps>0,
   $$
 subject to positive  measure initial data.

 The flux $f\in C^1(\RR)$ is assumed to satisfy a $p-$condition, a weak form of convexity.

  Existence and uniqueness of solutions is established. The method of proof relies on sharp decay estimates for viscous Hamilton-Jacobi equations.

\end{abstract}

\maketitle

   \vspace{.5in}
   \section{INTRODUCTION}

   We consider here the (viscous) nonlinear scalar conservation law in one space dimension, for an unknown real function $u(x,t),$
   \begin{equation}\label{generaleq}
      u_t + f(u)_x=\eps u_{xx},\quad (x,t)\in\RR\times\overline{\RP},\quad
   \eps>0,
   \end{equation}
   subject to the initial condition
     \begin{equation}
     \label{initialgen}
      u(x,0)=u_0(x)\in\Mcal_+,
     \end{equation}
      where $\Mcal_+=\Mcal_+(\RR)$ is the space of (finite) nonnegative Borel measures on the line.

  \be\label{eqassumefr}\aligned     \mbox{We  assume that}\,\, f\in C^1(\RR),\,f(0)=f'(0)=0,\\
      f'(u)\,\,\mbox{is locally H\"{o}lder for }\,\,u\in\RR.
  \endaligned\ee

        Throughout the paper we fix $\eps>0,$ and omit the obvious dependence of the solution on this parameter
     (namely, we write $u$ and not $\ueps$).

    In particular, we are interested in the case
     \be\label{eqmdeltainit} u_0(x)=M\delta_{x_0},\ee
      where $M>0,$ and $\delta_{x_0}$ is the Dirac mass at the point $x_0\in\RR.$

     Following the terminology in the linear theory, solutions to ~\eqref{generaleq}-\eqref{eqmdeltainit}
      are called \textit{fundamental solutions}. Another term used for such solutions is \textit{ source-type solutions}.
       The latter is probably better suited, due to the lack of a superposition principle. At any rate, these are solutions evolving from an initial (positive) measure that is located at a single point.

     This paper is concerned with the construction of solutions with measure initial data, generalizing the source-type solutions.

     Equations of the type of ~\eqref{generaleq} are referred to as ``convective-diffusive'' equations. The literature concerning such equations, as well as the related  ``convection-reaction-diffusion'' equations, is quite extensive.

         In the special  case

         \be\label{escvazzua} u_t=u_{xx}-|u|^{q-1}u_x,\quad q>1,\,\,x\in\RR,
         \ee
         the existence and uniqueness of the source-type solution (with initial data ~\eqref{eqmdeltainit}) is proved in ~\cite[Theorem 3]{escobedo}.

       We recall (see ~\cite{escobedo} ) that, in the case of Equation ~\eqref{escvazzua}, for $1<q<2,$  the solution $u(\cdot,t)$ (and, in fact, the solution for every initial function $u_0\in L^1(\RR)$ ) approaches, as $t\to\infty,$ the (self-similar) source-type solution of the hyperbolic equation $u_t=-|u|^{q-1}u_x.$ On the other hand, if $q\geq 2,$ the (nonlinear) convection term becomes negligible and the solution approaches, as $t\to\infty,$  the fundamental solution of the heat equation .

      We note that the long time  decay
     is strongly related to the problem of stability of travelling wave solutions (\cite{ds2} and references therein).


      We are primarily interested in estimates depending only on $\|u_0\|_\Mcal,$ the initial measure norm. For future reference, we make a clear distinction between  estimates that depend on $\eps>0,$ and those that do not.

      For a general flux $f(u)\in C^1(\RR),$ we obtain in Sections ~\ref{secgeneral},~\ref{secnashp} and the first part of Section ~\ref{secLsup}  estimates that depend on $\eps>0.$  On the other hand in Subsection ~\ref{subsecpcondition} we introduce the $p-$condition, a sort of ``weak convexity'' assumption, that has been used in the study of Hamilton-Jacobi equations. This condition allows us to give estimates that are independent of $\eps>0,$ see Corollary ~\ref{corestindepeps}.

      The $p-$condition is used in Theorem ~\ref{theoremuupkestimates} , where we state an existence and uniqueness theorem for solutions of ~\eqref{generaleq}, with measure initial data.

      In Section ~\ref{secutoq} we treat the special case $f(u)=u^q,$ with $q>1.$ This flux satisfies the $p-$condition (in fact $p=q$) , so that the general results can be applied, as well as some additional results depending on this special flux.

 \underline{\textbf{Notation}.}

      For a function $h(x,t)$ we denote $h_t=\frac{\partial}{\partial t}h(x,t)$ and $h_{x}=\frac{\partial}{\partial x}h(x,t).$

      Alternatively we use also $h_t=\partial_t h$ and $h_{x}=\partial_x h.$

      Second-order derivatives are denoted by  $h_{xx}$ or $\partial_x^2h.$

      We use $\RP=(0,\infty).$

  We denote by $\|\cdot\|_q$ the norm in $L^q(\RR),\quad 1\leq
   q\leq \infty.$

   The norm in the measure space $\Mcal_+(\RR)$ is designated as $\|\cdot\|_\Mcal.$

       $W^{k,q}(\RR),\quad 1\leq
   q\leq \infty$ , for a nonnegative integer $k,$ is the space of functions having (distributional) derivatives up to
   order $k$ in $L^q(\RR).$

      $C_0(\RR)$ is the space of continuous, compactly supported functions on $\RR.$

       $C^k(\RR)$ is the space of continuously differentiable  functions on $\RR,$ up to order $k.$

       $C^k_b(\RR)$ is the subspace of $C^k(\RR)$ consisting of all functions whose derivatives up to order $k$ are bounded in $\RR.$

       We write $C_b$ for $C_b^0.$

    \section{GENERAL FACTS for  CONSERVATION LAWS on the REAL LINE}\label{secgeneral}

    In this section we do not assume $u_0\geq 0,$ unless this is  explicitly imposed.

     It is well known that,  under the assumption $u_0\in L^1(\RR)\cap L^\infty(\RR),$ and just $f\in C^1(\RR),$
     Equation ~\eqref{generaleq} has a unique global classical solution $u(x,t),\,\,\,(x,t)\in\RR\times\RP,$
      that converges (in the $L^1$ topology) to $u_0(x)$ as $t\to 0.$
      This solution satisfies the maximum-minimum principle, namely, $-\|u_0\|_\infty\leq u(x,t)\leq \|u_0\|_\infty$
        ~\cite[Section 2.2]{godlewski}

      Another well-known fact is that $\|u(\cdot,t)\|_p$ is nonincreasing, as a function of $t\in[0,\infty),$
      for any $p\in[1,\infty].$

      The ``initial mass'' of the solution $M=\int_{\RR}u_0(x)dx$ is conserved by the evolution,
     \be
     \label{eqfixM}\int_{\RR} u(x,t)dx=M,\quad t>0.
     \ee

      In order to study initial data beyond  $L^1(\RR)\cap L^\infty(\RR),$  we shall need   estimates for the time decay of the norms $\|u(\cdot,t)\|_p,$
       using only  the initial $L^1$ norm $ \|u_0\|_1.$

       A well-known property is the \textit{comparison principle,} as follows.

          If $u_0,\,\,v_0\in
        C_0(\RR)$ are nonnegative initial data, with corresponding solutions $u(x,t),\,v(x,t),$ and if $u_0(x)\leq v_0(x)$ for $x\in \RR,$ then
                    for all $t>0,$
                    \be\label{eqorderpres}
                    u(x,t)\leq v(x,t),\quad x\in \RR.
                    \ee

                     \begin{lem}\label{lemcontraction}
                    Let $u(x,t),\,v(x,t)$ be solutions corresponding to initial functions $u_0,\,v_0\in C_0(\RR),$ respectively. Then,

                    \be\label{eqcontract}
                    \int_{\RR}| u(x,t)-v(x,t)|dx\leq \int_{\RR}| u_0(x)-v_0(x)|dx,\quad t>0,
                    \ee
                    and in particular (taking $v_0=0$)
                    $$ \int_{\RR}| u(x,t)|dx\leq \int_{\RR}| u_0(x)|dx.$$

\end{lem}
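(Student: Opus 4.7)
My plan is to set $w=u-v$ and work with its (linearized) equation. Since $u,v$ are classical solutions of \eqref{generaleq}, $w$ solves
$$w_t + \bigl(f(u)-f(v)\bigr)_x = \eps\, w_{xx}.$$
By the mean value theorem I would write $f(u)-f(v)=a(x,t)\,w$ with
$$a(x,t):=\int_0^1 f'\bigl(v+s(u-v)\bigr)\,ds,$$
a bounded function since $u,v$ are bounded by the maximum-minimum principle. Thus $w$ satisfies the ``linear'' equation $w_t+(aw)_x=\eps w_{xx}$.

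Next I would regularize the sign function. Choose $\eta_\delta$ convex, even, smooth, with $\eta_\delta(0)=0$, $|\eta_\delta'|\le 1$, $\eta_\delta''\ge 0$, and $\eta_\delta\to|\cdot|$ uniformly (e.g.\ $\eta_\delta(s)=\sqrt{s^2+\delta^2}-\delta$). Multiplying the $w$-equation by $\eta_\delta'(w)$ and integrating over $\RR$, the viscous term yields $-\eps\int \eta_\delta''(w)w_x^2\,dx\le 0$. For the convective term I would split $(aw)_x = a_x w + aw_x$ and integrate the $aw_x$ piece by parts, obtaining the key identity
$$\int_\RR \eta_\delta'(w)(aw)_x\,dx \;=\; \int_\RR a_x\bigl[w\,\eta_\delta'(w)-\eta_\delta(w)\bigr]\,dx.$$
As $\delta\to 0$, the bracket tends to $w\,\sgn(w)-|w|=0$ pointwise and is uniformly bounded by $2|w|$; assuming $a_x\in L^1(\RR)$, dominated convergence kills this term. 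Consequently $\tfrac{d}{dt}\int_\RR \eta_\delta(w)\,dx$ is bounded above by a quantity that vanishes with $\delta$. Integrating in time from $0$ to $t$ and letting $\delta\to 0$ then produces \eqref{eqcontract}. The second assertion is the special case $v\equiv 0$, which is a solution of \eqref{generaleq} because $f(0)=0$.

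The main technical obstacle is the hypothesis $a_x\in L^1(\RR)$ (equivalently, $u_x,v_x\in L^1(\RR)$ for each fixed $t>0$), needed to legitimize the integration by parts and the dominated convergence step. I would extract this from standard parabolic regularity: for $C_0(\RR)$ initial data the classical solution satisfies $u(\cdot,t),u_x(\cdot,t),u_{xx}(\cdot,t)\in L^1(\RR)\cap L^\infty(\RR)$ for every $t>0$, with decay at $\pm\infty$ sufficient to eliminate the boundary terms. A cleaner alternative, avoiding the sign approximation, would be to set $M_0:=\max(u_0,v_0)$ and $m_0:=\min(u_0,v_0)$, both in $C_0(\RR)$, let $U,V$ denote the corresponding solutions, apply the comparison principle to conclude $V\le u,v\le U$ (hence $|u-v|\le U-V$), and use mass conservation \eqref{eqfixM} to compute $\int(U-V)(x,t)\,dx=\int(M_0-m_0)(x)\,dx=\int|u_0-v_0|(x)\,dx$; this route needs \eqref{eqorderpres} without the positivity hypothesis, which does follow for smooth classical solutions from the parabolic maximum principle but is not explicitly stated as such in the paper.
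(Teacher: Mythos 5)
Your ``cleaner alternative'' at the end is, in substance, exactly the paper's proof. The paper simply cites the Crandall--Tartar lemma, whose hypotheses are the order preservation \eqref{eqorderpres} and the mass conservation \eqref{eqfixM}, and whose proof is precisely your argument: from $m_0\le u_0,v_0\le M_0$ one gets $V\le u,v\le U$, hence $|u-v|\le U-V$, and $\int(U-V)\,dx=\int(M_0-m_0)\,dx=\int|u_0-v_0|\,dx$. Your caveat about the sign restriction in \eqref{eqorderpres} is well taken but harmless: for classical solutions the difference of two solutions satisfies a linear parabolic equation (as in \eqref{generaleqdiff}), so the comparison principle holds without positivity. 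So on this route you have reproduced the paper's argument with the cited lemma unpacked.

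Your primary route (regularized sign function applied to $w=u-v$) is a genuinely different, Kruzhkov-type argument; the paper itself offers a second proof in the same spirit (Lemma \ref{lemcontract}), but phrased by duality against the adjoint linear equation \eqref{generaleqdual}, which sidesteps all the regularity issues you wrestle with. As written, your version has one real flaw beyond the acknowledged $a_x\in L^1$ hypothesis: the quantity $a_x=\partial_x\int_0^1 f'\bigl(v+s(u-v)\bigr)\,ds$ requires differentiating $f'$, i.e.\ a second derivative of $f$, whereas the standing assumption \eqref{eqassumefr} gives only $f\in C^1$ with $f'$ locally H\"older. This is repairable without ever forming $a_x$: integrate by parts the whole convective term to get
$$\int_{\RR}\eta_\delta'(w)\bigl(f(u)-f(v)\bigr)_x\,dx=-\int_{\RR}\eta_\delta''(w)\,w_x\,\bigl(f(u)-f(v)\bigr)\,dx,$$
then use $|f(u)-f(v)|\le C|w|$ together with $\eta_\delta''(s)\,|s|\le 1$ and $\eta_\delta''(s)\,|s|\to 0$ pointwise (for your choice of $\eta_\delta$), so the integrand is dominated by $C|w_x|$ and tends to zero; only $w_x(\cdot,t)\in L^1(\RR)$ is needed. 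With that correction your first route is sound, and it buys a proof that does not pass through order preservation at all --- but the duality proof of Lemma \ref{lemcontract} achieves the same with less regularity bookkeeping.
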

          \begin{proof} The properties ~\eqref{eqfixM} and ~\eqref{eqorderpres} allow us to invoke  the Crandall-Tartar lemma
         (~\cite{tartar},~\cite[Section 2.5]{godlewski}), which yields the contraction property ~\eqref{eqcontract}.

          \end{proof}

      We note that the $L^1$ contraction property ~\eqref{eqcontract} satisfied by the solutions to the viscous conservation law can be obtained without resorting to the Crandall-Tartar lemma (and to the order-preserving property ~\eqref{eqorderpres}). Instead, we can use the maximum-minimum principle (for linear equations).

          \begin{lem}\label{lemcontract}

          Let $u(x,t)$ be a solution to  ~\eqref{generaleq}, with  initial data $u_0\in
        C_0(\RR)$ . Then
          \begin{itemize}
          \item The maximum-minimum principle is satisfied by the solution,
                    \be\label{eqmaximum}\inf\limits_{x\in\RR}u_0(x)\leq\inf\limits_{x\in\RR}u(x,t)\leq
                    \sup\limits_{x\in\RR}u(x,t)\leq \sup\limits_{x\in\RR}u_0(x),\quad \forall t>0.\ee

      \item Let $u(x,t),\,\,v(x,t)$ be  solutions to ~\eqref{generaleq}, with respective initial data $u_0,\,v_0\in
        C_0(\RR).$

        Then ,

       \be \label{estcontract}\|u(\cdot,t)-v(\cdot,t)\|_1\leq
           \|u_0-v_0\|_1,\quad t>0.
                  \ee
   \end{itemize}
       \end{lem}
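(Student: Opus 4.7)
My plan is to view the equation as a linear parabolic equation with the nonlinearity frozen into a bounded coefficient, and to exploit the maximum principle for such linear equations.

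For part (i), the maximum-minimum principle \eqref{eqmaximum}: since $u$ is a classical solution, I rewrite $f(u)_x = f'(u)u_x$ so that
$$u_t + a(x,t)u_x = \eps u_{xx}, \qquad a(x,t) := f'(u(x,t)).$$
Because $u$ is bounded on each strip $\RR\times[0,T]$ and $f'\in C^0(\RR)$, the coefficient $a$ is bounded. The classical maximum-minimum principle for linear parabolic equations with bounded coefficients on the whole line, applied to both $u$ and $-u$ (and using that $u(\cdot,t)$ decays at $|x|=\infty$ since $u_0\in C_0(\RR)$), yields \eqref{eqmaximum}.

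For part (ii), the $L^1$ contraction \eqref{estcontract}: set $w := u-v$ and write
$$f(u)-f(v) = b(x,t)\,w, \qquad b(x,t) := \int_0^1 f'\bigl(v+s(u-v)\bigr)\,ds.$$
Then $w$ solves the \emph{linear} conservation-type equation
$$w_t + (bw)_x = \eps w_{xx},$$
with $b$ bounded on $\RR\times[0,T]$ (and Hölder in $(x,t)$, thanks to the local Hölder hypothesis in \eqref{eqassumefr} together with the regularity of $u,v$). I now run a duality/adjoint argument. For fixed $T>0$ and $\phi_T\in C_0^\infty(\RR)$ with $\|\phi_T\|_\infty\le 1$, solve backward in time the adjoint problem
$$\phi_t + b\phi_x + \eps\phi_{xx} = 0, \qquad \phi(\cdot,T)=\phi_T,$$
on $\RR\times[0,T]$. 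After the time-reversal $\tau=T-t$ this is a standard linear parabolic problem with bounded Hölder coefficient, so by part (i) applied to $\phi$ and $-\phi$ one obtains $\|\phi(\cdot,t)\|_\infty\le 1$ for all $t\in[0,T]$. A direct computation, using integration by parts (justified by the decay of $w$ and $\phi$ at infinity),
$$\frac{d}{dt}\int_\RR w\phi\,dx = \int_\RR\bigl[(\eps w_{xx}-(bw)_x)\phi + w(-b\phi_x-\eps\phi_{xx})\bigr]\,dx = 0,$$
yields
$$\int_\RR w(x,T)\phi_T(x)\,dx = \int_\RR w_0(x)\phi(x,0)\,dx \le \|w_0\|_1\cdot\|\phi(\cdot,0)\|_\infty \le \|w_0\|_1,$$
and taking the supremum over admissible $\phi_T$ by $L^1$-$L^\infty$ duality gives \eqref{estcontract}.

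The main technical obstacle is the justification of the duality step: constructing a sufficiently regular classical solution $\phi$ of the backward problem and verifying the spatial decay of both $w$ and $\phi$ needed to legitimize the integration by parts and to pass to the limit $t\downarrow 0$ in $\int w\phi\,dx$. Since $u_0,v_0\in C_0(\RR)$ forces $w_0$ to have compact support (so that $w$ inherits Gaussian-type spatial decay from standard parabolic regularity) and $\phi_T\in C_0^\infty(\RR)$ admits a classical solution of the backward problem via Schauder theory applied to the time-reversed equation, these technicalities are routine; once dispatched, the proof reduces to the clean $L^1$-$L^\infty$ duality identity above.
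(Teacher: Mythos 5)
Your proof is correct and follows essentially the same route as the paper: part (i) by freezing the coefficient $a=f'(u)$ and invoking the linear maximum--minimum principle, and part (ii) by writing $w_t+(bw)_x=\eps w_{xx}$ with $b$ the divided difference of $f$ and running the duality argument against the backward adjoint problem $\phi_t+b\phi_x+\eps\phi_{xx}=0$. The only (immaterial) difference is that you write $b$ as $\int_0^1 f'(v+s(u-v))\,ds$ rather than $\frac{f(u)-f(v)}{u-v}$, and you spell out the technical justifications somewhat more explicitly.
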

       \begin{proof}
         The maximum-minimum principle is obtained by invoking its validity for the linear convection-diffusion equation. Indeed, consider the linear equation
           $$z_t+f'(u(x,t)) z_x=\eps z_{xx},\quad z(x,0)=u_0(x),$$
           and apply the linear maximum-minimum principle to it.

       To establish the contraction property, let $w(x,t)=u(x,t)-v(x,t).$ It satisfies the equation
       \be
       \label{generaleqdiff}
      w_t +(b(u,v)w)_x=\eps  w_{xx},\quad (x,t)\in\RR\times\overline{\RP},
          \ee
          where $b(u,v)=\frac{f(u)-f(v)}{u-v}.$

          Fix $T>0.$ The dual equation to ~\eqref{generaleqdiff} in the strip $\RR\times[0,T]$ is the
          linear parabolic equation

         \be \label{generaleqdual}
      \phi_t + b(u,v)\phi_x=-\eps  \phi_{xx},\quad (x,t)\in\RR\times[0,T],
          \ee
          subject to the ``terminal'' condition
          $$\phi(x,T)=\phi_T(x)\in C^\infty_0(\RR),$$
          as well as the  boundary condition that
          $$\lim\limits_{R\to\infty}\sup\limits_{|x|>R,\,t\in[0,T]}|\phi(x,t)|=0,.$$
          Clearly $\phi$ satisfies the maximum-minimum principle
          $$\|\phi(\cdot,0)\|_\infty\leq \|\phi_T\|_\infty,$$
          which implies by a standard  duality argument that
          $$\|w(\cdot,T)\|_1\leq \|w(\cdot,0)\|_1.$$
          Since $T>0$ is arbitrary,   ~\eqref{estcontract} is established.
       \end{proof}
 \begin{rem}\label{remlimL11d}Lemma ~\ref{lemcontract} implies that the solution operator
          $$S(t)u_0=u(\cdot,t),\quad t>0,$$
          is a contraction in $L^1,$ hence can be extended to any $u_0\in L^1(\RR).$ However, we have very little information about this extension. In particular, it is not even clear if it is indeed a solution, even in a weak sense, of Equation ~\eqref{generaleq}.
       \end{rem}

       \subsection{FURTHER ESTIMATES }

     In deriving the following estimates, we  assume that  the initial function $u_0$ is
     smooth and compactly supported. This ensures that the  the solution $u(x,t)$   decays at infinity, for any fixed $t>0.$


     The estimates for general initial data will follow by a standard density argument.

     In addition to the contraction property (Lemma ~\ref{lemcontraction}) we have also
     \begin{lem} Assume that $u_0(x)\geq 0.$
     Then for every $p\in[2,\infty),$
                    \be\label{eqcontractp}
                    \int_{\RR} u(x,t)^p\,dx\leq \int_{\RR} u_0(x)^p\,dx.
                    \ee
                  \item The following spacetime estimate holds,
          \be\label{eqspacetime}
         2\eps\int_0^\infty \int_{\RR}|u_x(x,t)|^2\,dxdt\leq\int_{\RR} u_0(x)^2\,dx.
          \ee
     \end{lem}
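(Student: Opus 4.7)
The plan is to test the equation against $p u^{p-1}$, exploit the divergence form of the convection term, and extract the viscous dissipation via integration by parts; the spacetime bound then falls out by specializing to $p=2$ and integrating in time.

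Since $u_0\geq 0$, the maximum-minimum principle of Lemma~\ref{lemcontract} gives $u(x,t)\geq 0$ for all $t>0$, so $u^{p-1}$ makes sense pointwise; because $p\geq 2$ and $u\geq 0$, the factor $u^{p-2}$ that will appear is also nonnegative and well-defined. Multiplying \eqref{generaleq} by $p u^{p-1}$ yields
\[
(u^p)_t + p u^{p-1} f(u)_x = \eps p u^{p-1} u_{xx}.
\]
The convection term is a perfect $x$-derivative: setting $F(u) = \int_0^u p s^{p-1} f'(s)\,ds$ (finite because $f\in C^1$ with $f'(0)=0$), we have $p u^{p-1} f(u)_x = F(u)_x$, which integrates to zero over $\RR$. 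An integration by parts on the viscous term then gives
\[
\frac{d}{dt}\int_\RR u^p\,dx \;=\; -\,\eps\, p(p-1)\int_\RR u^{p-2} u_x^2\,dx \;\leq\; 0,
\]
from which \eqref{eqcontractp} follows by integrating in $t$. Setting $p=2$ yields the identity
\[
\frac{d}{dt}\int_\RR u^2\,dx + 2\eps\int_\RR u_x^2\,dx = 0,
\]
and integrating from $0$ to $T$, using $\int_\RR u(x,T)^2\,dx\geq 0$, and letting $T\to\infty$ delivers \eqref{eqspacetime}.

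The only real obstacle is to justify these formal manipulations---namely, to discard the boundary contributions at $|x|\to\infty$ from the integration of $F(u)_x$ and from the integration by parts on $u^{p-1} u_{xx}$. This is precisely why the subsection places itself under the standing assumption that $u_0$ is smooth and compactly supported: classical parabolic theory for \eqref{generaleq} then guarantees rapid spatial decay of $u(\cdot,t)$ and its derivatives for each fixed $t>0$, legitimizing the computation. The extension to general $u_0\in L^p(\RR)\cap \Mcal_+(\RR)$ is then recovered by a density argument using the $L^1$ contraction \eqref{estcontract} of Lemma~\ref{lemcontract}.
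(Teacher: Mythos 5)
Your proof is correct and follows essentially the same route as the paper's: multiply by $pu^{p-1}$, observe that the convection term is an exact $x$-derivative $F(u)_x$ (the paper writes $g'(u)=u^{p-1}f'(u)$) and hence integrates to zero, integrate by parts on the viscous term to obtain $\frac{d}{dt}\int_\RR u^p\,dx=-\eps p(p-1)\int_\RR u^{p-2}u_x^2\,dx\le 0$, and then set $p=2$ and integrate in time for the spacetime bound. Your additional remarks on nonnegativity of $u$ via the maximum principle and on justifying the vanishing of boundary terms through the standing smooth, compactly supported hypothesis on $u_0$ match the paper's framing of that subsection.
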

     \begin{proof}
     By the maximum principle the solution $u(x,t)$ is nonnegative.

To obtain the  estimate ~\eqref{eqcontractp} we multiply Equation ~\eqref{generaleq} by $u^{p-1}$ and integrate over $\RR.$ Noting that
    $$\int_\RR u^{p-1} f(u)_xdx=\int_\RR g(u)_xdx=0,$$
     where $g'(u)=u^{p-1}f'(u),$ we get
     \be\label{eqddtlpnorm}
     d/dt\int_\RR u(x,t)^p\,dx=-\eps p(p-1)\int_\RR u(x,t)^{p-2}| u_x(x,t)|^2\,dx\leq 0.
     \ee
      To obtain the spacetime estimate ~\eqref{eqspacetime} we take $p=2$ in ~\eqref{eqddtlpnorm} and integrate with respect to time.
\end{proof}

\section{ $L^p( \RR)$ ESTIMATES  by the NASH INEQUALITY}\label{secnashp}
 Our treatment of the $L^p$ estimates is  based on the Nash inequality ~\cite{beckner, nash}  restricted  to the one-dimensional case over the whole line. It can be stated as follows.

  Let $\phi$ be an integrable Lipschitz function on $\RR.$
   Then
\begin{equation}
\label{Nash-class-1d} \Big(\int_{\RR}|\phi|^2dx\Big)^{3}\leq
C\int_{\RR}|\phi_x|^2dx\cdot
\Big(\int_{\RR}|\phi|dx\Big)^{4}.
 \end{equation}

            In the context of convection-diffusion equations, the Nash inequality is stated as Lemma 1 in ~\cite{zua}.

            Even though it is not strictly needed, we shall assume in what follows that $u_0\geq 0.$ This will simplify the estimates, as powers of the solution $u$ can be taken without absolute values.

Using the Nash  inequality we obtain the following lemma.

\begin{lem} Let $u(x,t)$ be a solution to ~\eqref{generaleq}, with $u_0\in L^1(\RR).$

    There exists
       a constant $C>0,$  independent of $u_0,\,p,\,\eps,$ such that, for any $2\leq p<\infty$

       \be \label{uMsqrtt1d}\|u(\cdot,t)\|_p\leq
           \Big(\frac{Cp}{\eps}\Big)^{\frac{p-1}{2p}}\|u_0\|_1\,\,t^{-\frac{p-1}{2p}},\quad t>0.
                  \ee
\end{lem}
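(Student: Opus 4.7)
The plan is to combine the differential identity for $\|u(\cdot,t)\|_p^p$ from equation~\eqref{eqddtlpnorm} with the one-dimensional Nash inequality~\eqref{Nash-class-1d} applied to $\phi = u^{p/2}$, and then close the resulting estimate by interpolating $\|u\|_{p/2}$ between $\|u\|_1$ and $\|u\|_p$. The main technical point is deriving and integrating the resulting nonlinear differential inequality for $y(t):=\|u(\cdot,t)\|_p^p$.

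First, using $(u^{p/2})_x=(p/2)u^{p/2-1}u_x$, I would rewrite~\eqref{eqddtlpnorm} as
\[
y'(t) \;=\; -\frac{4\eps(p-1)}{p}\int_\RR\bigl|(u^{p/2})_x\bigr|^2\,dx,
\]
so that the dissipation is expressed in terms of $\|\partial_x(u^{p/2})\|_2^2$. Applying~\eqref{Nash-class-1d} to $\phi = u^{p/2}$, and noting $\|\phi\|_2^2 = y$ and $\|\phi\|_1 = \|u(\cdot,t)\|_{p/2}^{p/2}$, yields
\[
y(t)^3 \;\leq\; C\,\|\partial_x(u^{p/2})\|_2^2\,\|u(\cdot,t)\|_{p/2}^{2p},
\]
and combining the two relations gives
\[
y'(t) \;\leq\; -\frac{4\eps(p-1)}{Cp}\cdot\frac{y(t)^3}{\|u(\cdot,t)\|_{p/2}^{2p}}.
\]

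Next I would eliminate $\|u\|_{p/2}$ using the log-convexity interpolation $\|u\|_{p/2}\leq \|u\|_1^{1/(p-1)}\|u\|_p^{(p-2)/(p-1)}$ (valid for $p\geq 2$), combined with the contraction estimate $\|u(\cdot,t)\|_1\leq \|u_0\|_1$ from Lemma~\ref{lemcontract}. A short arithmetic with the exponents, $3 - 2(p-2)/(p-1) = (p+1)/(p-1)$, produces the autonomous Bernoulli-type inequality
\[
y'(t) \;\leq\; -\frac{4\eps(p-1)}{Cp\,\|u_0\|_1^{2p/(p-1)}}\;y(t)^{(p+1)/(p-1)}.
\]
Setting $\alpha := (p+1)/(p-1)$, so that $\alpha-1 = 2/(p-1)>0$, one finds $(y^{1-\alpha})'(t) \geq (\alpha-1)C_1$ with $C_1:=4\eps(p-1)/(Cp\|u_0\|_1^{2p/(p-1)})$; discarding the positive term $y(0)^{1-\alpha}$ after integration in $t$ yields
\[
y(t)^{-2/(p-1)}\;\geq\;\frac{8\eps}{Cp\,\|u_0\|_1^{2p/(p-1)}}\,t,
\]
and raising to the power $(p-1)/(2p)$ gives~\eqref{uMsqrtt1d} after absorbing $8$ into $C$. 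As customary (cf.\ the density argument preceding~\eqref{eqcontractp}), one justifies these manipulations for smooth compactly supported $u_0\geq 0$, where the decay of $u(\cdot,t)$ at infinity legitimizes the integrations by parts, and then extends to $u_0\in L^1(\RR)$ by density.

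The only delicate step is the arithmetic that combines the cubic Nash exponent with the interpolation power $2(p-2)/(p-1)$ to yield an effective exponent $(p+1)/(p-1)>1$ on $y$; without this strict inequality the term $y(0)^{1-\alpha}$ could not be dropped and the decay rate $t^{-(p-1)/(2p)}$ would be lost. Explicitly tracking the $p$- and $\eps$-dependence of the constant $(Cp/\eps)^{(p-1)/(2p)}$ is what will later make the passage $p\to\infty$ (in Section~\ref{secLsup}) feasible for $L^\infty$ estimates.
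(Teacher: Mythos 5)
Your proposal is correct and follows essentially the same route as the paper: the same energy identity \eqref{eqddtlpnorm}, the Nash inequality applied to $\phi=u^{p/2}$, the interpolation $\|u\|_{p/2}\leq\|u\|_p^{(p-2)/(p-1)}\|u\|_1^{1/(p-1)}$ together with the $L^1$ contraction, and the same Bernoulli-type differential inequality with exponent $1+2/(p-1)=(p+1)/(p-1)$, leading to the same constant $\bigl(Cp/(8\eps)\bigr)^{(p-1)/(2p)}$. The only cosmetic difference is that the paper closes the ODE step by comparison with the solution having $w(0)=\infty$, whereas you integrate $(y^{1-\alpha})'$ directly and drop the nonnegative initial term; these are equivalent.
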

 \begin{proof} The proof uses the Nash inequality in a way that is essentially identical to the proof of Proposition 1 in ~\cite{zua}. Since it plays an important role in what follows, we bring it here for the convenience of the reader.

   By the well-posedness of ~\eqref{generaleq} in $L^1(\RR)$ we can assume $u_0\in C^1_0(\RR).$

       Multiplying ~\eqref{generaleq} by $pu(x,t)^{p-1},$ where $p\geq 2,$
        and integrating over $\RR$ we get
       \be\label{multequ1d}\aligned
       \frac{d}{dt}\int_\RR u(x,t)^pdx+p\int_\RR u(x,t)^{p-1}
        f(u(x,t))_xdx\\=-\eps p(p-1)\int_\RR u(x,t)^{p-2}| u_x(x,t)|^2dx\\
        =-4\eps\frac{p-1}{p}\int_\RR  |(u(x,t)^{\frac{p}{2}})_x|^2dx.
       \endaligned\ee
         The second integral in the left-hand side of the equality vanishes,
          being the integral of $ F(u(x,t))_x,$ with $F'(\xi)=pf'(\xi)\xi^{p-1}.$

         Invoking the Nash inequality ~\eqref{Nash-class-1d} with $\phi=u(\cdot,t)^{\frac{p}{2}}$ to get
         \be\label{estint1d}\int_\RR |(u(x,t)^{\frac{p}{2}})_x|^2dx\geq C^{-1}\Big(\int_\RR
         u(x,t)^pdx\Big)^{3}
         \cdot\Big(\int_\RR
         |u(x,t)|^{\frac{p}{2}}dx\Big)^{-4}.\ee
         We now use the interpolation inequality
         $$\|g\|_{\frac{p}{2}}\leq\|g\|_p^{\frac{p-2}{p-1}}\|g\|_1^{\frac{1}{p-1}},$$
         with $g=u(\cdot,t).$ Inserting this in ~\eqref{estint1d}
         results in
       \be\label{estint11d}\aligned\int_\RR  (u(x,t)^{\frac{p}{2}})_x^2dx\hspace{200pt}\\
       \geq C^{-1}\Big(\int_\RR
         u(x,t)^pdx\Big)^{1+\frac{2}{p-1}}
         \cdot\Big(\int_\RR
         |u(x,t)|dx\Big)^{-\frac{2p}{p-1}}.\endaligned\ee

         We let $z(t)=\|u(\cdot,t)\|_p^p.$  Recall that
          $\|u(\cdot,t)\|_1\leq \|u_0\|_1. $ Thus, we
         obtain from ~\eqref{multequ1d} and ~\eqref{estint11d},
        \be\label{zztag1d} z'(t)\leq -4C^{-1}\eps\frac{p-1}{p}\|u_0\|_1^{{-\frac{2p}{p-1}}}z(t)^{1+\frac{2}{p-1}}.\ee
           Comparison with the solution of
           $$w'(t)=-4C^{-1}\eps\frac{p-1}{p}\|u_0\|_1^{{-\frac{2p}{p-1}}}w(t)^{1+\frac{2}{p-1}},\,
           w(0)=\infty,$$
            yields

          $$ z(t)\leq \Big(\frac{L_{p}}{\eps}\Big)^{\frac{p-1}{2}}\|u_0\|_1^p\,\,t^{-\frac{p-1}{2}} ,$$
          where
          $$L_{p}=\frac{pC}{8}.$$
          This concludes the proof of  ~\eqref{uMsqrtt1d}.
          \end{proof}

\section{ $L^\infty( \RR)$ ESTIMATES  }\label{secLsup}
        In this section we turn  to $L^\infty( \RR)$ estimates in the one-dimensional case . Here and throughout the rest of the paper we assume that
        \be
        u_0(x)\geq 0.
        \ee

We first recall the sharp estimate of Carlen and Loss ~\cite[Theorem
     1]{loss}, in the case of a scalar conservation law in $\Rn:$

     \begin{equation}\label{generaleqRn}
     u_t +\nabla\cdot f(u)=\eps \Delta u,\quad (x,t)\in\Rn\times\overline{\RP},\quad
  \eps>0.
   \end{equation}

subject to
   \be
     u_0(x)\in L^1(\Rn)\cap L^\infty(\Rn).
   \ee
     In their estimate there is no need to impose a convexity assumption on the flux function $f(u),$ but they impose a regularity   assumption that can be roughly described as:
      \be\label{eqregfcarlen}  \frac{f(\xi)}{\xi}\in C^1(\Rn).
      \ee
     \begin{lem}[Carlen-Loss]\label{lemcarlenloss}
     Assume $  u_0(x)\in L^1( \Rn)\cap  L^\infty( \Rn),$ and that the flux function satisfies ~\eqref{eqregfcarlen}.

     Then
     \be\label{carlenest}
        \|u(\cdot,t)\|_p\leq K(p)\cdot(4\pi\eps t)^{-\frac{n}{2}(1-\frac1p)} \|u_0\|_1,\quad 1\leq p\leq \infty,\,\,t>0,
     \ee
     where $K(p)=\Big(\frac{4\pi}{p}\Big)^{\frac{n}{2p}},\,1<p<\infty,$ and
     $K(1)=K(\infty)=1.$
     \end{lem}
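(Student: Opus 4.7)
The plan is to follow the sharp entropy-dissipation approach of Carlen--Loss, which may be regarded as the sharp-constant analogue in $\Rn$ of the Nash-inequality scheme of Section \ref{secnashp}. First I would reduce to the case $u_0 \in C^\infty_0(\Rn)$ with $u_0 \geq 0$, using the $n$-dimensional analogue of the $L^1$-contraction of Lemma \ref{lemcontract} together with the maximum principle, which ensures the approximating solutions remain nonnegative and inherit the decay bound; by density the estimate then extends to general $u_0 \in L^1(\Rn)\cap L^\infty(\Rn)$.

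For such smooth, compactly supported data, multiplying \eqref{generaleqRn} by $p\, u(x,t)^{p-1}$ and integrating over $\Rn$ gives, exactly as in \eqref{multequ1d},
$$\frac{d}{dt}\|u(\cdot,t)\|_p^p = -\frac{4\eps(p-1)}{p}\int_{\Rn}|\nabla u^{p/2}|^2\,dx,$$
the convective contribution vanishing because the hypothesis \eqref{eqregfcarlen} guarantees that $u^{p-1}\nabla\cdot f(u)$ is a perfect divergence (with primitive controlled via $f(\xi)/\xi \in C^1$), and because $u(\cdot,t)$ has sufficient decay at infinity for the integration by parts to be valid.

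Next I would apply the $n$-dimensional Nash inequality in its \emph{sharp} (Beckner/Gaussian) form with $\phi = u^{p/2}$, combined with the interpolation $\|u\|_{p/2}\leq\|u\|_p^{(p-2)/(p-1)}\|u\|_1^{1/(p-1)}$ already used in Section \ref{secnashp}, and the conservation/contraction bound $\|u(\cdot,t)\|_1\leq\|u_0\|_1$. Setting $z(t) = \|u(\cdot,t)\|_p^p$, this produces a Bernoulli-type differential inequality of the form
$$z'(t) \leq -A_p\, z(t)^{1+\frac{2}{n(p-1)}},$$
whose coefficient $A_p$ is explicit in $\eps,n,p$ and $\|u_0\|_1$. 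Comparison with the singular Bernoulli ODE whose solution blows up at $t=0^+$, as in the derivation of \eqref{uMsqrtt1d}, then yields \eqref{carlenest} for the range $2 \leq p < \infty$. The endpoint $p=1$ is just the $L^1$-contraction; the case $1 \leq p \leq 2$ follows by H\"older interpolation between $p=1$ and $p=2$; and $p = \infty$ is obtained by passing to the limit $p \to \infty$ in the bound just proved, using $\lim_{p\to\infty} K(p)=1$.

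The main obstacle is obtaining the \emph{sharp} constant $K(p) = (4\pi/p)^{n/(2p)}$ rather than merely some admissible constant with the correct power of $t$: the generic Nash constant employed in Section \ref{secnashp} yields the right decay rate but not the Gaussian prefactor $4\pi$. To recover this value one must either invoke Beckner's sharp form of the Nash inequality or, equivalently via the Gross--Bakry--\'Emery equivalence, the sharp Gaussian logarithmic Sobolev inequality, and then carefully track how the exponents combine through the interpolation step so that the final prefactor in $z(t)$ collapses to $(4\pi/p)^{n/(2p)}$. The remaining technicalities---vanishing of boundary terms at infinity, smoothness of $u^{p/2}$ needed to apply the Nash inequality, and the density step transferring the bound to general $L^1\cap L^\infty$ data---are standard parabolic-regularity arguments once the reduction to smooth compactly supported data has been made.
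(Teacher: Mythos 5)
The paper does not prove Lemma~\ref{lemcarlenloss} at all: it is quoted verbatim from Carlen and Loss \cite{loss} (their Theorem~1), so there is no in-paper argument to compare yours against. Judged on its own, your sketch correctly identifies the two structural facts that make the result possible --- the convective term drops out of the $L^p$ energy identity because $u^{p-1}f'(u)\nabla u$ is a perfect gradient (this is exactly the mechanism of \eqref{multequ1d}), and a Nash-type Bernoulli inequality then gives the correct decay \emph{rate} $t^{-\frac n2(1-\frac1p)}$. That part is sound and parallels Section~\ref{secnashp}.

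The genuine gap is the sharp constant, which is the entire content of the lemma as stated. A one-shot Nash/Bernoulli comparison, even using Beckner's sharp form of the Nash inequality, does not produce the heat-kernel prefactor $(4\pi\eps t)^{-\frac n2(1-\frac1p)}K(p)$: the Nash iteration is known to lose a constant relative to the heat semigroup, and the sharp Nash constant involves the Neumann eigenvalue of the ball, not $4\pi$, so ``carefully tracking the exponents'' will not make the prefactor collapse to $(4\pi/p)^{n/(2p)}$. Carlen and Loss instead run the Gross logarithmic-Sobolev method with a \emph{time-dependent exponent} $p(t)$ sweeping from $1$ to $p$, differentiating $\|u(\cdot,t)\|_{p(t)}$ along the flow; your sketch names this alternative but does not carry it out, and the two routes are not interchangeable at the level of constants. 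Relatedly, your passage to $p=\infty$ by letting $p\to\infty$ in the $L^p$ bound fails unless the sharp constants are already in hand: with the generic Nash constant the prefactor grows like $p^{1/2}$ (compare the factor $(Cp/\eps)^{\frac{p-1}{2p}}$ in the paper's own \eqref{uMsqrtt1d}), so the limit diverges. The standard repair is a separate $L^2\to L^\infty$ bound obtained by duality against the time-reversed adjoint equation, composed with the $L^1\to L^2$ bound at time $t/2$; some such additional step is needed and is missing from your argument.
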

        Taking $n=1$ in the Carlen-Loss estimate   we get
        \be\label{carlenestn1}
        \|u(\cdot,t)\|_\infty\leq (4\pi\eps t)^{-\frac{1}{2}} \|u_0\|_1,\quad t>0.
     \ee

         We note that for the prototypical example $f(u)=u^q$ the assumption ~\eqref{eqregfcarlen} requires $q\geq 2.$

         On the other hand a less optimal (in terms of the coefficient) estimate is obtained in ~\cite[Lemma 3.1]{feir} under the sole assumption $f(u)\in C^1(\RR)$
           (where in fact a much wider class of degenerate convection-diffusion equations is considered):
           \begin{lem}~\cite[Lemma 3.1]{feir}\label{lemlaurencot} Suppose that $f\in C^1(\RR),$ then for some $C>0,$ depending on $\eps,$
            \be\label{eqlaurenc}
              \|u(\cdot,t)\|_\infty\leq Ct^{-\frac{1}{2}} \|u_0\|_1,\,\,\,\,t>0.
            \ee
           \end{lem}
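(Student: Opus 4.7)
The natural strategy is to bootstrap the $L^p$ estimate~\eqref{uMsqrtt1d} of Section~\ref{secnashp} to an $L^\infty$ bound via a Moser-type iteration; a direct passage $p\to\infty$ in~\eqref{uMsqrtt1d} is forbidden because the prefactor $(Cp/\eps)^{(p-1)/(2p)}$ diverges in $p$.

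First I would extend~\eqref{uMsqrtt1d} to an $L^p\to L^q$ smoothing estimate. Repeating the computation leading to~\eqref{multequ1d}--\eqref{estint1d} but starting from $u_0\in L^p$ and applying Nash's inequality~\eqref{Nash-class-1d} to $\phi=u^{q/2}$ with the choice $q=2p$ (so that $\|u^{q/2}\|_1=\|u\|_p^p$ requires no further interpolation), one obtains the differential inequality
$\frac{d}{dt}\|u\|_q^q\leq -A\,\|u\|_q^{3q}\,\|u\|_p^{-2q}.$
Using the monotonicity of $\|u(\cdot,s)\|_p$ from~\eqref{eqcontractp} and comparing with the ODE $y'=-A'y^3$ (started at $y=\infty$) yields
\be\label{eqplanLqLp}
\|u(\cdot,t_{k+1})\|_q\leq \Big(\frac{C q}{\eps}\Big)^{1/(2q)}\|u(\cdot,t_k)\|_p\,\tau^{-1/(2q)},\qquad \tau=t_{k+1}-t_k.
\ee

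Next, iterate with $p_k=2^k$ and split-times $t_k=t(1-2^{-k})$, $\tau_k=t\cdot 2^{-(k+1)}$, composing~\eqref{eqplanLqLp} on each interval $[t_k,t_{k+1}]$ (the case $k=0$ is exactly~\eqref{uMsqrtt1d}). Telescoping,
\be
\|u(\cdot,t_K)\|_{p_K}\leq B_K\,\|u_0\|_1\,t^{-\beta_K},
\ee
where $\beta_K=\suml_{k<K}(2p_{k+1})^{-1}\to 1/2$ and
$\log B_K=\suml_{k<K}(2p_{k+1})^{-1}\log(Cp_{k+1}\cdot 2^{k+1}/\eps)$
converges, each term being $O(k\cdot 2^{-k})$. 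Letting $K\to\infty$ and using that $\|u(\cdot,t)\|_{p_K}\to\|u(\cdot,t)\|_\infty$ (valid because the maximum principle~\eqref{eqmaximum} ensures $u(\cdot,t)\in L^\infty$ for smooth compactly supported data) yields~\eqref{eqlaurenc}; density in $L^1$ then extends the estimate to arbitrary $u_0\in L^1(\RR)$.

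The main obstacle is establishing~\eqref{eqplanLqLp} with constants growing at most polynomially in $q$, so that multiplied by the geometrically decaying exponents $1/(2p_{k+1})=2^{-(k+2)}$ the iteration telescopes to a finite limit. This polynomial dependence is precisely the gift of Nash's inequality in one dimension; everything else is bookkeeping. Note that no convexity or superlinear growth of $f$ enters the argument, consistent with the lemma's sole hypothesis $f\in C^1(\RR)$.
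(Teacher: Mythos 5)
The paper does not prove this lemma at all: it is imported verbatim as a citation of \cite[Lemma 3.1]{feir}, so there is no internal argument to compare against. Your Moser-type iteration is therefore a genuinely different route --- a self-contained proof built entirely on machinery the paper already develops in Section~\ref{secnashp}. I checked the key step: multiplying by $qu^{q-1}$, applying Nash with $\phi=u^{q/2}$ and $q=2p$ gives $\frac{d}{dt}\|u\|_q^q\leq -4\eps\frac{q-1}{q}C^{-1}\|u\|_q^{3q}\|u\|_p^{-2q}$, and the comparison with $y'=-A'y^3$, $y(t_k)=\infty$, yields exactly your $L^p\to L^q$ smoothing bound with prefactor $(2A\tau)^{-1/(2q)}$, which is of the claimed form $(Cq/\eps)^{1/(2q)}\tau^{-1/(2q)}$; the exponents $\sum_k 2^{-(k+2)}\to\frac12$ and the product of constants converges since each logarithmic term is $O(k2^{-k})$. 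Two small points to tighten: (i) your telescoped bound controls $\|u(\cdot,t_K)\|_{p_K}$ at the intermediate time $t_K<t$, so you should invoke the monotonicity of $t\mapsto\|u(\cdot,t)\|_{p_K}$ (i.e.\ \eqref{eqcontractp}, valid here since $p_K\geq 2$) to transfer the bound to time $t$ before letting $K\to\infty$; (ii) the vanishing of $\int u^{q-1}f(u)_x\,dx$ and the applicability of Nash to $u^{q/2}$ require the same reduction to smooth compactly supported (and, as in Section~\ref{secnashp}, nonnegative) data followed by $L^1$-density that the paper itself uses. What your approach buys is transparency and a constant with explicit $\eps^{-1/2}$ dependence, at the cost of working under the standing sign assumption; what the citation to \cite{feir} buys is coverage of a much wider class of degenerate convection--diffusion equations without that reduction.
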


            Thus, actually we can take any $q>1$ in the case $f(u)=u^q$ .

           We observe that this estimate depends on $\eps.$  Furthermore, it is not clear what are the optimal decay estimates, depending possibly on the special features of $f(u).$  This is manifested in
              Theorem ~\ref{basicthm} and Theorem ~\ref{corind} below. Observe that in these theorems the estimates are independent of $\eps,$ in contrast to the  estimates ~\eqref{carlenestn1} and ~\eqref{eqlaurenc}. The decay estimates  in the above mentioned theorems is due to the effect of the nonlinear convective term, an effect
              that is completely absent in the application of the Nash or logarithmic Sobolev inequalities.

              \subsection{CONVEXITY and the $p-$CONDITION }\label{subsecpcondition}

              To establish an $L^\infty$  estimate (that is independent of $\eps>0$), we use the equivalence of the one-dimensional conservation law and the Hamilton-Jacobi equation. The main shortcoming in this approach is that it is based on a \textbf{convexity hypothesis} imposed on $f(u).$

              Slightly more generally, we begin by introducing a certain class of continuously differentiable functions on $[0,\infty)$ as follows (taken from ~\cite{bbl}). Let $\Psi(r)\in C^1[0,\infty)$ be a nonnegative function, having the following property.

           \begin{equation}
\label{Hass}
\begin{cases}

& \bullet\ \mbox{There exists a family of nonnegative smooth functions}\\

& \{\Phim\}_{\eta>0} \quad\mbox{defined in}\quad [0,\infty)\quad
\mbox{such that}\\
(i)&\quad \Phim(0)=0 \quad \mbox{for all} \quad \eta>0.\\
(ii)&\quad \Phim(r^2) \xrightarrow[\eta\rightarrow
{0}^{+}]{}\Psi(r),\quad \mbox{uniformly  in compact intervals of}\quad
[0,\infty).
 \end{cases}
 \end{equation}

\begin{defn}
 \label{pcondi}
 Consider the family of functions $\{\Thetam\}_{\eta>0}$ defined by
$$
\Thetam(r)=2r\Phim'(r)-\Phim(r),\qquad (r,\eta)\in [0,\infty)\times (0,
\infty).
$$
Let $p\in (1,\infty).$ We say that $\Psi$ satisfies the
\textbf{$p$-condition} if there exist $\gamma>0,\quad a>0, \quad b>0,
$ such that, for $r>0$ and sufficiently small $\eta>0,$
\be\label{eqHJThetaeta}
\Thetam(r)\geq
ar^{\frac{p}{2}}-b\eta^\gamma,\quad\mbox{if}\quad p\in(1,\infty).\\
\ee
\end{defn}

Now in addition to our basic assumption ~\eqref{eqassumefr} on $f,$ we impose the following assumption.
\begin{equation}
\label{Hass3}
\qquad \mbox{ For some $p\in (1,\infty)$,}\quad f \quad
\mbox{satisfies the $p$-condition}.
\end{equation}

As was shown in ~\cite{bbl}, the prototypical example

\begin{equation}
\label{special}
f(r)=r^p,\quad p\in (1,\infty),\quad
\end{equation}
satisfies the above assumptions with
$\Phim(r)=(r+\eta^2)^{\frac{p}{2}}-\eta^p \quad.$ In fact, the same
argument shows that one can take
\begin{equation}
\label{specialpl}
f(r)=\sum\limits_{k=1}^m \mu_k r^{p_k},\quad \mu_k>0,
\end{equation}
where  $\set{p_1,...,p_m}\in
(1,\infty)^m,$ and $p=\max\set{p_1,...,p_m}.$
\begin{rem} In the paper ~\cite{bbl} the case $0<p<1$ is also considered. However we note that in this case $f\notin C^1(\RR).$
\end{rem}

\begin{thm}\cite{bbl} \label{boundedviscHJ}
    Consider the equation
 \begin{equation}\label{generaleqHJ}
      v_t + f(v_x)=\eps  v_{xx},\quad (x,t)\in\RR\times\overline{\RP},\quad
   \eps>0,
   \end{equation}
   subject to the initial condition
     \begin{equation}
     \label{initialgenHJ}
      v(x,0)=\varphi(x).
     \end{equation}
Let $p \in (1,\infty), $ and let $ \varphi \in
C_b(\mathbb{R})$. Assume that $f$ satisfies the $p-$condition
 \eqref{Hass3}.
Then the solution $v(x,t)$ satisfies
 \begin{equation}\label{vgradx}
|\partial_x v(x,t)| \leq \|\varphi\|_{\infty}^{\frac{1}{p}} \,
(at)^{-\frac{1}{p}}  \, .
\end{equation}
\end{thm}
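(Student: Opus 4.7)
The plan is to prove the estimate by a classical Bernstein-type argument, using the $p$-condition to supply the nonlinear gain that exactly cancels the leading growth term in a suitable auxiliary function. As a preliminary, I mollify $\varphi$ and (modulo a harmless constant that can be absorbed into $a$) assume $\varphi \in C_b^\infty(\RR)$ with $0 \leq \varphi \leq \|\varphi\|_\infty$, so that by the parabolic maximum principle the solution $v$ of \eqref{generaleqHJ} is classically smooth with $0 \leq v \leq \|\varphi\|_\infty$. Next, exploiting the approximating family $\{\Phim\}$ from \eqref{Hass}, I set $F_\eta(r) := \Phim(r^2)$, which converges to $f$, and consider the classical solution $v^\eta$ of the regularized equation $v^\eta_t + F_\eta(v^\eta_x) = \eps v^\eta_{xx}$. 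I aim to prove the gradient bound for $v^\eta$ up to an $O(\eta^\gamma)$ error and send $\eta \to 0$.

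Differentiating the regularized HJ in $x$ gives $w_t + F_\eta'(w) w_x = \eps w_{xx}$ for $w := v^\eta_x$. For $z := |w|^p$, a direct chain-rule computation then produces
\[
z_t + F_\eta'(w)\, z_x - \eps z_{xx} = -\eps p(p-1)|w|^{p-2} w_x^2 \leq 0,
\]
so $z$ is a subsolution of the linearized advection-diffusion operator attached to $v^\eta$.

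The heart of the argument is the auxiliary function $U(x,t) := t\, z(x,t) - a^{-1}\, v^\eta(x,t)$. Combining the subsolution inequality for $z$, the equation $v^\eta_t + F_\eta(w) = \eps v^\eta_{xx}$, and the $p$-condition in the form $r F_\eta'(r) - F_\eta(r) = \Thetam(r^2) \geq a|r|^p - b\eta^\gamma$, a direct computation gives
\[
U_t + F_\eta'(w)\, U_x - \eps U_{xx} = z - \eps p(p-1) t\, |w|^{p-2} w_x^2 - \frac{1}{a}\bigl[w F_\eta'(w) - F_\eta(w)\bigr] \leq \frac{b\eta^\gamma}{a},
\]
in which the factor $z = |w|^p$ is cancelled exactly by $a^{-1}\cdot a|w|^p$ coming from the $p$-condition. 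The parabolic maximum principle (legitimized by the decay at infinity of $v^\eta$ and its derivatives in the smooth regularized setting) then gives $\sup_x U(x,t) \leq \sup_x U(x,0) + (b\eta^\gamma/a) t \leq (b\eta^\gamma/a) t$, using $U(x,0) = -\varphi(x)/a \leq 0$. Rearranging and bounding $v^\eta \leq \|\varphi\|_\infty$, we obtain $|v^\eta_x|^p \leq \|\varphi\|_\infty/(at) + b\eta^\gamma/a$; sending $\eta \to 0$ and invoking stability of $v^\eta \to v$ yields \eqref{vgradx}.

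The main obstacle is the technical regularization: one must ensure that $v^\eta$ is classically smooth with enough far-field decay of its derivatives to legitimize the maximum principle applied to $U$; that $|w|^p$ (only weakly $C^2$ at $w=0$) admits the pointwise calculus via a further smoothing such as $(w^2+\delta)^{p/2}$ with $\delta \to 0$; and that the $\eta \to 0$ limit is stable enough to propagate the gradient bound to $v$. The algebraic identity in the third paragraph is the crux of the proof, and it also explains why exactly the rate $t^{-1/p}$ emerges.
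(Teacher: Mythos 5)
The paper itself gives no proof of this theorem---it is quoted from \cite{bbl}---and your Bernstein-type argument is essentially the one used there: the definition $\Thetam(r)=2r\Phim'(r)-\Phim(r)$ is tailored precisely so that, with $F_\eta(w)=\Phim(w^2)$, the quantity $wF_\eta'(w)-F_\eta(w)=\Thetam(w^2)\ge a|w|^p-b\eta^\gamma$ cancels the $z=|w|^p$ term in your auxiliary function $tz-a^{-1}v^\eta$, and your computation is correct (with the maximum principle on $\RR\times(0,T]$ understood in its Phragm\'en--Lindel\"of form for bounded subsolutions, since bounded data give boundedness rather than decay at infinity). The one small caveat is that your normalization $0\le\varphi$ is exactly what produces the constant $\|\varphi\|_\infty^{1/p}$ (for sign-changing $\varphi$ the same argument yields the oscillation of $\varphi$ in place of $\|\varphi\|_\infty$); this is harmless here, since the theorem is applied only with $\varphi(x)=\int_{-\infty}^x u_0\,dy\ge 0$.
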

   The application of this theorem to Equation ~\eqref{generaleq}
          is straightforward.
     \begin{cor}\label{corestindepeps} Let $u(x,t)$ be the solution to  Equation ~\eqref{generaleq} , subject to the initial condition
     \begin{equation}
     \label{eqinitialviscHJ}
     0\leq u(x,0)=u_0(x)\in L^1( \mathbb{R}).
     \end{equation}
        Assume that $f$ satisfies the $p-$condition
 \eqref{Hass3}.

      Then for all $t>0,$
      \be\label{eqestimateuconserv}
      \|u(\cdot,t)\|_\infty\leq \|u_0\|_1^{\frac{1}{p}} \,
(at)^{-\frac{1}{p}} .
      \ee
     \end{cor}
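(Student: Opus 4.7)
The plan is to exploit the well-known equivalence, in one space dimension, between a scalar conservation law and a Hamilton--Jacobi equation: if $v$ solves the viscous HJ equation \eqref{generaleqHJ} with nonlinearity $f$, then $u:=v_x$ formally solves \eqref{generaleq}, by differentiating the HJ equation once in $x$. Conversely, given a solution $u$ of \eqref{generaleq} with $u_0 \in L^1(\RR)$, $u_0\geq 0$, I would introduce the primitive
\[
v(x,t) := \int_{-\infty}^{x} u(y,t)\,dy,
\]
so that $v_x(\cdot,t)=u(\cdot,t)$ and $v(x,0)=\varphi(x):=\int_{-\infty}^{x} u_0(y)\,dy$. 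The goal then is simply to invoke the gradient bound of Theorem~\ref{boundedviscHJ} for $v$, which yields
\[
\|u(\cdot,t)\|_\infty = \|v_x(\cdot,t)\|_\infty \leq \|\varphi\|_\infty^{1/p}(at)^{-1/p},
\]
and to recognize that, because $u_0\geq 0$ is in $L^1(\RR)$, the primitive $\varphi$ is monotone nondecreasing with $\varphi(-\infty)=0$ and $\varphi(+\infty)=\|u_0\|_1$, so $\varphi\in C_b(\RR)$ and $\|\varphi\|_\infty=\|u_0\|_1$. Substituting gives the desired inequality \eqref{eqestimateuconserv}.

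To make the equivalence rigorous I would first reduce, by the density argument explained in the paragraph preceding Lemma~\ref{lemcontraction} and the $L^1$-contraction of Lemma~\ref{lemcontract}, to the case $u_0\in C_0^\infty(\RR)$ with $u_0\geq 0$. In this smooth setting, $\varphi\in C_b^\infty(\RR)$, so Theorem~\ref{boundedviscHJ} applies and produces a classical solution $v$ of \eqref{generaleqHJ}-\eqref{initialgenHJ}. Differentiating the HJ equation in $x$ shows that $v_x$ is a classical solution of \eqref{generaleq} with initial data $\varphi'=u_0$; by the uniqueness result recalled at the beginning of Section~\ref{secgeneral}, $u=v_x$, and the gradient estimate \eqref{vgradx} transfers to \eqref{eqestimateuconserv}.

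Finally, the bound thus obtained for smooth compactly supported $u_0$ involves only $\|u_0\|_1$, so it passes to the limit for arbitrary nonnegative $u_0\in L^1(\RR)$: given a sequence $u_0^{(k)}\in C_0^\infty(\RR)$, $u_0^{(k)}\geq 0$, with $u_0^{(k)}\to u_0$ in $L^1(\RR)$, the contraction \eqref{estcontract} yields $u^{(k)}(\cdot,t)\to u(\cdot,t)$ in $L^1(\RR)$, while each $u^{(k)}$ satisfies \eqref{eqestimateuconserv} with $\|u_0^{(k)}\|_1\to\|u_0\|_1$. The estimate survives the passage to the limit.

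The only delicate point is making the passage from the HJ gradient bound to the conservation law estimate watertight for general $u_0\in L^1$: one must be sure the two notions of solution coincide and that uniqueness holds on both sides. Since Section~\ref{secgeneral} already supplies existence, uniqueness, and $L^1$-contraction for \eqref{generaleq} with data in $L^1\cap L^\infty$, and the HJ solution from Theorem~\ref{boundedviscHJ} is constructed via a smooth approximation to which the same argument applies, this obstacle is essentially bookkeeping rather than a substantive difficulty.
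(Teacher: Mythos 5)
Your proposal is correct and follows essentially the same route as the paper: introduce the primitive $v(x,t)=\int_{-\infty}^x u(y,t)\,dy$, observe that it solves the viscous Hamilton--Jacobi equation \eqref{generaleqHJ} with initial datum $\varphi=\int_{-\infty}^x u_0\in C_b(\RR)$ satisfying $\|\varphi\|_\infty=\|u_0\|_1$, and apply the gradient bound \eqref{vgradx} of Theorem~\ref{boundedviscHJ}. The additional density and uniqueness bookkeeping you supply is sound but is left implicit in the paper's two-line proof.
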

     \begin{proof} Taking $v(x,t)=\int_{-\infty}^x u(y,t)dy,$ we observe that $v$ satisfies ~\eqref{generaleqHJ}, subject to the initial condition $\varphi(x)=\int_{-\infty}^x u_0(y)dy\in C_b(\RR).$ Thus, the estimate ~\eqref{eqestimateuconserv} follows directly from ~\eqref{vgradx}.
     \end{proof}

    \section{ SOLUTIONS with MEASURE INITIAL DATA}\label{secmeasureinitial}
    We consider again the scalar conservation law ~\eqref{generaleq}.  We now assume that,
    \be\label{initialmeasure}
      u_0(x)\in \Mcal_+.
    \ee

      The existence and uniqueness of  a source-type solution , with $f(u)=u^q,\,q>1,$ and $u_0=M\delta_{x_0},\,M>0,$ are established in ~\cite[Theorem 3]{escobedo}.

          \begin{defn} A continuous function $u(x,t),\,(x,t)\in\RR\times\RP,$ is a \textbf{classical solution} to the general conservation law ~\eqref{generaleq} if the partial derivatives $u_t,\,u_x$ and $u_{xx}$ are continuous and the equation is satisfied pointwise.
          \end{defn}

      \begin{thm}\label{theoremuupkestimates}
      Consider the general conservation law ~\eqref{generaleq} with initial data ~\eqref{initialmeasure},  where $u_0\geq 0$ is a  compactly supported Borel measure on $\RR.$

      Assume that
      \begin{itemize}
      \item \textbf{ASSUMPTION 1:} $f$ satisfies the $p-$condition (Definition ~\ref{pcondi})
 , for some $p>1.$
   \item \textbf{ASSUMPTION 2:}
          \,\,\,There exists a  constant $C>0,$ such that (for the same $p$),
          \be\label{eqassumpgrowthpfder}
        | f'(r)|\leq C(1+r^{p-1}),\quad r>0.
 \ee
     Observe that it implies the growth condition
 \be\label{eqassumpgrowthpf}
         f(r)\leq C(r+r^{p}),\quad r>0.
 \ee
 \end{itemize}
      Then there exists a nonnegative classical solution $u(x,t),\,(x,t)\in\RR\times\RP,$ so that
      \be\label{eqconvtodelta} u(\cdot,t)\xrightarrow[t\to 0+]{}u_0,
      \ee
      the convergence being in the sense of measures.

      In addition, this solution has the following properties.
      \begin{itemize}
      \item \be\label{eqmeqintuxt}
      \int_{\RR}u(x,t)dx\equiv\|u_0\|_\Mcal,\quad t>0.
      \ee
      \item There exists a constant $C>0,$ such that
      \be\label{eqestlinfutpcond}
      \|u(\cdot,t)\|_\infty\leq C\|u_0\|_\Mcal^{\frac1p}t^{-\frac1p},\quad t>0.
      \ee
      \end{itemize}
      \textbf{Uniqueness:} Let $v(x,t)$ be a classical solution satisfying, in the sense of measures,
        \be\label{eqconvtodeltav} v(\cdot,t)\xrightarrow[t\to 0+]{}u_0.
      \ee
         Assume further that, for some constant $C_1>0$ it satisfies the estimate
         \be\label{eqestlinfvtpcond}
      \|v(\cdot,t)\|_\infty\leq C_1\|u_0\|_\Mcal^{\frac1p}t^{-\frac1p}, \quad t>0.
      \ee
       Define the functions
       \be\label{eqdefUV}
       U(x,t)=\int\limits_{-\infty}^x u(y,t)dy,\quad V(x,t)=\int\limits_{-\infty}^x v(y,t)dy.
       \ee
          Assume that the difference  $Z(x,t)=U(x,t)-V(x,t)$ is continuous in the strip $\RR\times[0,\eta]$ for some $\eta>0.$

        Then
        \be u(x,t)\equiv v(x,t).
        \ee
      \end{thm}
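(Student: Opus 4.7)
The natural approach is approximation. Mollify $u_0$ to obtain $u_0^{(k)} \in C_0^\infty(\RR)$, nonnegative, supported in a fixed compact set, with $\|u_0^{(k)}\|_1 \to \|u_0\|_\Mcal$ and $u_0^{(k)} \rightharpoonup u_0$ weakly--$*$ as measures. Section~\ref{secgeneral} supplies unique nonnegative classical solutions $u^{(k)}$ with conserved mass $\|u_0^{(k)}\|_1$. Corollary~\ref{corestindepeps} then furnishes the $k$--uniform bound $\|u^{(k)}(\cdot,t)\|_\infty \leq \|u_0^{(k)}\|_1^{1/p}(at)^{-1/p}$; combined with the Hölder continuity of $f'$, interior parabolic Schauder estimates upgrade this to uniform $C^{2,\alpha}_{loc}$ bounds on $\RR\times\RP$. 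A diagonal extraction yields a subsequence converging in $C^{2,1}_{loc}$ to a nonnegative classical solution $u$ of \eqref{generaleq} inheriting \eqref{eqestlinfutpcond} and the $L^1$ bound $\|u_0\|_\Mcal$.

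\textbf{Recovery of the initial measure and mass conservation.} For $\phi \in C_b^2(\RR)$ and $t>0$, the weak formulation gives
$$\int u^{(k)}(x,t)\phi(x)\,dx - \int \phi\,du_0^{(k)} = \int_0^t\!\int_\RR \bigl[\eps u^{(k)}\phi'' - f(u^{(k)})\phi'\bigr]\,dx\,ds.$$
Growth condition \eqref{eqassumpgrowthpf}, together with $\|u^{(k)}(\cdot,s)\|_1 \leq \|u_0\|_\Mcal$ and $\|u^{(k)}(\cdot,s)\|_\infty \leq Cs^{-1/p}$, yields $\int|f(u^{(k)})|\,dx \leq C(1+s^{-(p-1)/p})$, which is $L^1_s$ near $0$ since $(p-1)/p<1$. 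Hence the right-hand side is $O(t^{1/p})$ uniformly in $k$. Sending $k\to\infty$ (via local convergence) and then $t\to0+$ proves \eqref{eqconvtodelta}. Taking $\phi$ to be a smooth cutoff converging to $1$ and using the integrable tails of $u(\cdot,t)$ gives \eqref{eqmeqintuxt}.

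\textbf{Plan for uniqueness.} Integrating \eqref{generaleq} once in $x$ shows that $U, V$ are classical solutions of the viscous Hamilton--Jacobi equation $W_t + f(W_x) = \eps W_{xx}$. Their difference $Z=U-V$ then satisfies the linear parabolic equation
$$Z_t + b(x,t)Z_x = \eps Z_{xx}, \qquad b(x,t) = \int_0^1 f'\bigl(\theta u(x,t) + (1-\theta)v(x,t)\bigr)\,d\theta.$$
By \eqref{eqassumpgrowthpfder} and the bounds \eqref{eqestlinfutpcond}--\eqref{eqestlinfvtpcond}, $\|b(\cdot,t)\|_\infty \leq C(1+t^{-(p-1)/p})$, which is $L^1$ on $[0,\eta]$. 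Since $u(\cdot,t), v(\cdot,t)$ both converge weakly--$*$ to $u_0$, $Z(x,t)-Z(y,t) \to 0$ as $t\to 0+$ at every pair $x,y$ in the continuity set of the distribution function of $u_0$; combined with the continuity of $Z$ on the strip and the vanishing of $Z$ at $x=-\infty$ (which follows from $u,v\in L^1$ with identical total mass), this forces $Z(\cdot,0)\equiv 0$ and $Z(x,t)\to 0$ as $|x|\to\infty$ uniformly on $[0,\eta]$. A maximum principle for linear parabolic equations with $L^1_t$-drift, applied to $\pm Z$, then gives $Z\equiv 0$ on $\RR\times[0,\eta]$. Differentiating in $x$ yields $u\equiv v$ there, and the classical $L^1\cap L^\infty$ well-posedness propagates this to all $t>0$.

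\textbf{Main obstacle.} The delicate step is the maximum principle in the uniqueness argument: the drift $b$ is singular as $t\to 0$, so one cannot appeal to the standard bounded-drift statement. I would handle this by a truncation in time --- freeze $b$ below a level $\tau>0$, apply the bounded-coefficient maximum principle to obtain $\|Z(\cdot,t)\|_\infty \leq \|Z(\cdot,\tau)\|_\infty$ for $t\in[\tau,\eta]$, and let $\tau\to 0+$ using the continuity of $Z$ up to $t=0$. The essential structural fact is that the $L^\infty$-decay exponent $1/p$ exactly matches the growth exponent $p-1$ of $f'$ to produce a time-integrable bound $t^{-(p-1)/p}$ with $(p-1)/p<1$; this match is precisely what the $p$-condition provides and is what makes the whole argument close.
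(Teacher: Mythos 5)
Your overall architecture coincides with the paper's: mollify $u_0$, solve for $u^{(k)}$, use Corollary~\ref{corestindepeps} for the $\eps$-independent sup-norm bound, pass to the limit, recover the initial measure from the weak formulation using the growth condition \eqref{eqassumpgrowthpf} and the integrability of $s^{-(p-1)/p}$ near $s=0$, and prove uniqueness by reducing to the linear parabolic equation satisfied by $Z=U-V$ on strips $[\tau,\eta]$ where the averaged coefficient is bounded, then letting $\tau\to 0$ via the continuity of $Z$ up to $t=0$. Your uniqueness scheme (truncation in time plus a maximum principle on $[\tau,\eta]$, together with the observation that $Z(\cdot,0)\equiv 0$ follows from the a.e. convergence of $U,V$ to the distribution function of $u_0$ and the spatial localization coming from compact support) is the same in substance as the paper's, which packages it through Helly's theorem and a sequence of attained maxima $M_k=Z(x_k,t_k)\to Z(\bar x,0)=0$. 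The treatment of \eqref{eqmeqintuxt} by cutoff functions is a legitimate (and slightly more direct) alternative to the paper's combination of Fatou's lemma with lower semicontinuity of the mass under weak-$*$ convergence.

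There is, however, one genuine gap, and it sits exactly where the paper invests most of its effort. You assert that the uniform $L^\infty$ bound ``combined with the H\"older continuity of $f'$'' lets interior Schauder estimates deliver uniform $C^{2,\alpha}_{loc}$ bounds. This is circular as stated: to apply Schauder to $u^{(k)}_t-\eps u^{(k)}_{xx}=-f'(u^{(k)})u^{(k)}_x$ you need the right-hand side to be H\"older continuous \emph{uniformly in $k$}, which presupposes uniform boundedness and uniform H\"older continuity of $u^{(k)}_x$ --- precisely what is to be proved. Since only $f\in C^1$ with locally H\"older $f'$ is assumed (no $f''$), one also cannot differentiate the equation, and the paper explicitly notes (see the remark preceding Claim~\ref{ClaimHoldcont}) that the structural hypotheses of the general quasilinear theory in \cite{LSU} are not a priori verified uniformly along the sequence. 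The missing step is an intermediate, $k$-uniform gradient estimate: the paper obtains it by a Duhamel/Gronwall argument on the integral equation \eqref{equxint}, yielding $\sup_k\|\partial_x u^{(k)}(\cdot,t)\|_\infty<\infty$ for $t\geq 2\tau$, then uniform H\"older continuity of $\partial_x u^{(k)}$ in $x$ (via the kernel estimate \eqref{eqGepsxxho}) and in $t$ (via the interpolation argument with $J^{(k)}$), and only then invokes Schauder for the heat equation with the H\"older source $a_k=-f(u^{(k)})_x$. Your plan needs this bootstrap (or an equivalent De Giorgi--Nash--Moser plus divergence-form gradient-H\"older step) spelled out; without it the compactness of the approximating sequence, and hence the existence of a classical limit solution, is not established.
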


          \begin{proof} The solution will be constructed as a limit of regular solutions, obtained by regularizing the singular initial data.

         \underline{ In the first part of the proof, including Claim ~\ref{ClaimHoldcont},}

         \underline{ we do not use the $p-$condition,} just the fact that $f\in C^1(\RR).$  This means that we shall need to deal carefully with estimating spatial and temporal
 derivatives of the approximating sequence.

      Let $\set{u_0^{(k)}}_{k=1}^\infty\subseteq C^\infty_0(\RR)$ be a sequence of nonnegative test functions such that

      \be
      u_0(x)=\lim_{k\to\infty}u_0^{(k)}(x),
      \ee
       where the limit is taken in the sense of measures. If the approximating sequence is obtained by convolving
        $u_0$ with a compactly supported (nonnegative) mollifier, then we can further impose the condition
        \be \sup\limits_{1\leq k<\infty}\|u_0^{(k)}\|_1\leq M:= \|u_0\|_{\Mcal}.
        \ee


   In addition, due to the compact support of $u_0,$ we can assume
       $$supp\,\uzupk\subseteq K,$$
       where $K\Subset\RR $ is a compact interval.

       Let $\uupk(x,t)$ be the solution to ~\eqref{generaleq} subject to the initial condition $u_0^{(k)},$ namely
       \begin{equation}\label{generaleqk}
      \uupk_t + f(\uupk)_x=\eps \uupk_{xx},\quad (x,t)\in\RR\times\overline{\RP},\quad
   \eps>0,
   \end{equation}
        $$\uupk(x,0)=u_0^{(k)}.$$

       In view of Lemma ~\ref{lemcontract} we first have
       \be\label{equupklessM} \|\uupk(\cdot,t)\|_1\leq M,\quad k=1,2,\ldots.\ee

       Let us fix $\tau>0.$ It follows from the $L^\infty$ estimate in Lemma ~\ref{lemlaurencot} that
       \be\Upsilon(\tau)=\sup\limits_{k=1,2,\ldots}\|\uupk(\cdot,\tau)\|_\infty<\infty.\ee
           The maximum principle yields
           \be\label{equnifbdduk}\sup\limits_{k=1,2,\ldots}\|\uupk(\cdot,t)\|_\infty\leq\Upsilon(\tau),\quad t>\tau.\ee
            \begin{rem} Note that some of the estimates in Section ~\ref{secLsup} depend on $\eps>0,$ while some others are $\eps-$independent . For simplicity in what follows, we shall not explicitly mention this dependence.
       \end{rem}

       It follows that
       \be
       \sup\limits_{k=1,2,\ldots}\|\uupk(\cdot,\tau)\|_2<\infty.
       \ee
       In view of ~\eqref{eqspacetime}
        \be\label{equnifl2uxk}
         \eps\sup\limits_{k=1,2,\ldots}\int_\tau^\infty \int_{\RR}|\uupk_x(x,t)|^2\,dxdt<\infty,
          \ee
          hence, by the uniform boundedness ~\eqref{equnifbdduk},

\be\label{equnifl2fuk}
         \eps\sup\limits_{k=1,2,\ldots}\int_\tau^\infty \int_{\RR}|f(\uupk)_x(x,t)|^2\,dxdt<\infty.
          \ee
          The standard $L^2$ theory ~\cite[Section VII.3]{lieberman} now implies that, in every domain
       $P$ such that $\overline{P}$ is compact in $\RR\times[\tau,\infty),$
       \be\label{equnifbdl2derivs}\sup\limits_{k=1,2,\ldots}\int_P\Big[|\uupk_{xx}(x,t)|^2+|\uupk_{t}(x,t)|^2\Big]\,dxdt<\infty.\ee

         The uniform estimates ~\eqref{equnifbdduk} and ~\eqref{equnifbdl2derivs}  by themselves do not imply the (local) convergence of the sequence of solutions  $\set{\uupk}_{k=1}^\infty$ and their derivatives. In general, the $L^\infty$ estimates  should yield, by ``standard parabolic estimates'',  the fact that this set of solutions, along with their first and second-order $x-$derivatives, as well as the first-order $t-$derivatives, are uniformly H\"{o}lder continuous in every domain
       $P$ such that $\overline{P}$ is compact in $\RR\times\RP$ (see e.g. ~\cite[Lemma 4.17]{lieberman}). However, for such estimates to hold one must rely on the fact that the nonlinear term  $f(\uupk)_x$ in ~\eqref{generaleqk} is itself H\"{o}lder continuous in $(x,t),$ and that such estimates are uniform with respect to $k.$  Furthermore, we are not assuming the existence of a second derivative $f''(u).$ Thus, a direct argument seems to be desirable.
       \begin{rem} Note that an application of the theory of general (second-order) parabolic equation to the special case of the viscous conservation law ~\eqref{generaleq} is not straightforward. Specifically, we need first to establish uniform  (with respect to $k$) H\"{o}lder continuity in $(x,t)\in\RR\times[\beta,\infty),$ for every $\beta>0.$ For example, if we take the general nonlinear equation (restricted to one space dimension) in ~\cite[Chapter V]{LSU}, it reads
       $$u_t-\partial_xa(x,t,u,u_x)+b(x,t,u,u_x)=0,$$
       then it covers ~\eqref{generaleq} , with the possibility of $a(x,t,u,u_x)=\eps u_x-f(u)$ and $b\equiv 0$ or $a(x,t,u,u_x)=\eps u_x$ and $b(x,t,u,u_x)=f'(u)u_x.$ However, taking either choice, the constraints imposed in ~\cite[Chapter V, Section 1]{LSU} are not (apriori) satisfied, since they need to hold uniformly for the sequence of derivatives $\set{\uupk_x}_{k=1}^\infty.$ The following claim establishes such uniform estimates.
       \end{rem}

       We formulate the pointwise estimates in the following claim.
                \begin{claim}\label{ClaimHoldcont} The sequences $$\set{\partial_t\uupk(x,t)}_{k=1}^\infty,\,\,\set{\partial_x\uupk(x,t)}_{k=1}^\infty ,\,\,\set{\partial_{xx}\uupk(x,t)}_{k=1}^\infty,$$
                are uniformly bounded  in $(x,t)\in\RR\times [4\tau,\infty).$

                In addition , the sequence $\set{\partial_x\uupk(x,t)}_{k=1}^\infty$
                                is uniformly H\"{o}lder continuous in $(x,t)\in\RR\times [4\tau,\infty),$ with respect to the two variables $x,t.$

                \end{claim}

                 \begin{proof}[Proof of Claim ~\ref{ClaimHoldcont}]

       Let \be\label{eqGeps1D} G_\eps(x,t)=(4\pi \eps t)^{-\frac{1}{2}}\exp(-\frac{x^2}{4\eps t})\ee be the heat
                kernel in $\RR,$ so that, for $t>\tau,$
                \be\label{equupkGeps}\aligned\uupk(x,t)=\int\limits_\RR G_\eps(x-y,t-\tau)\uupk(y,\tau)dy\\-\int\limits_{\tau}^t\int\limits_\RR \Geps(x-y,t-s)
                f(\uupk(y,s))_ydyds\\=\int\limits_\RR G_\eps(x-y,t-\tau)\uupk(y,\tau)dy\\-\int\limits_{\tau}^t\int\limits_\RR \partial_x\Geps(x-y,t-s)
                f(\uupk(y,s))dyds
                .\endaligned\ee
                Differentiating with respect to $x,$
                \be\aligned\label{equxint}\partial_x\uupk(x,t)=\int\limits_\RR \partial_xG_\eps(x-y,t-\tau)\uupk(y,\tau)dy\\-\int\limits_{\tau}^t\int\limits_\RR \partial_x\Geps(x-y,t-s)
                f(\uupk(y,s))_ydyds.\endaligned\ee
                Let, for $t>\tau,$
                 $$A_k(t)= \|\partial_x\uupk(\cdot,t)\|_\infty,\,\,k=1,2,\ldots.$$
                Using the equalities (where $C>0$ is a universal generic constant),
                 \be\label{Gepsx} \|\partial_x\Geps(\cdot,t)\|_\infty=C(\eps t)^{-1},\,\,\|\partial_x\Geps(\cdot,t)\|_1=C(\eps t)^{-\frac12},\quad t>0,\ee
                 we get from ~\eqref{equxint},
                                   for $k=1,2,\ldots$
                 \be\label{eqestAkt}A_k(t)\leq A_0(t)+C_1C\eps^{-\frac12}\int\limits_\tau^t (t-s)^{-\frac12}A_k(s)ds,\,\,\,t>\tau,\ee
                 where, in view of ~\eqref{equnifbdduk},
                 \be A_0(t)= C\Upsilon(\tau)[\eps (t-\tau)]^{-\frac12},\quad t>\tau,\ee
                 and
                 $$ C_1=\sup\limits_{|v|\leq \Upsilon(\tau)}|f'(v)|.$$
                 Shifting the variable $t=\widetilde{t}+\tau$ and defining $\widetilde{A}_k(\widetilde{t})=A_k(t)$ the last estimate can be written as
                 \be
                 \widetilde{A}_k(\widetilde{t})\leq \widetilde{A}_0(\widetilde{t})+C_1C\eps^{-\frac12}\int\limits_0^{\widetilde{t}} (\widetilde{t}-\widetilde{s})^{-\frac12}\widetilde{A}_k(\widetilde{s})d\widetilde{s},
                 \,\,\,\widetilde{t}>0.
                 \ee
                 Defining $\Theta_k(\widetilde{t})=\sup\limits_{0<\widetilde{s}\leq \widetilde{t}}\Big[(\eps\widetilde{s})^\frac12 \widetilde{A}_k(\widetilde{s})\Big],\,\,\,k=0,1,2,\ldots,$

                 we have
                 $$\Theta_k(\widetilde{t})\leq \Theta_0(\widetilde{t})+C_1C{\widetilde{t}}^\frac12\int\limits_0^{\widetilde{t}} (\widetilde{t}-\widetilde{s})^{-\frac12}{\widetilde{s}}^{-\frac12} \Theta_k(\widetilde{s})
                 d\widetilde{s}.$$

                  We now take $\widetilde{T}>0$  so that
                   $$ 2C_1C{\widetilde{T}}^\frac12\int\limits_0^1 (1-u)^{-\frac12}u^{-\frac12}
                                     du<\frac12.$$
                 The last estimate now yields, for $0<\widetilde{t}\leq\widetilde{T},$
                 $$\Theta_k(\widetilde{t})\leq 2\Theta_0(\widetilde{t})\leq C\Upsilon(\tau) , \quad k=1,2,\ldots,$$

                 We conclude that, with $T=\tau+\widetilde{T},$
                 \be\sup\limits_{k=1,2,\ldots}\|\partial_x\uupk(\cdot,t)\|_\infty\leq C\Upsilon(\tau) [\eps (t-\tau)]^{-\frac12},\quad \tau<t<T.\ee

                 In particular, the sequence $\set{\partial_x\uupk(x,t)}_{k=1}^\infty$ is uniformly bounded in $\RR\times [\frac{T+\tau}{2},T].$ Note that $\widetilde{T}$ depends only on $\Upsilon(\tau),$ which is a nonincreasing function of $\tau.$ Thus, we can proceed by uniform $t-$intervals of length $\frac12(T-\tau)$ and obtain
                 a uniform limit
                 \be\label{eqestux}\Upsilon_1(\tau)=\underset{2\tau\leq t< \infty}{\sup\limits_{k=1,2,\ldots}}\|\partial_x\uupk(\cdot,t)\|_\infty<\infty.\ee

                  Now in  addition to ~\eqref{Gepsx} we have
    \be\label{Gepsxx}\|\partial_{x}^2\Geps(\cdot,t)\|_1=C(\eps t)^{-1},\quad t>0,\ee
        that can be used to estimate
            $$\aligned\int\limits_{\RR}|\partial_x\Geps(x+h,t)-\partial_x\Geps(x,t)|dx\hspace{200pt}\\ \leq
            h\int\limits_{\RR}\int\limits_{0}^1|\partial_x^2\Geps(x+\mu h,t)|dxd\mu\leq Ch(\eps t)^{-1},\quad t>0.\endaligned $$
                Hence by interpolation , for any $0\leq \alpha\leq 1,$
                \be\label{eqGepsxxho}
                \int\limits_{\RR}|\partial_x\Geps(x+h,t)-\partial_x\Geps(x,t)|dx\leq C(\eps t)^{-\frac12(1+\alpha)}|h|^\alpha,\quad t>0.
                \ee

                 From ~\eqref{equxint} we obtain, for $t>2\tau,$
                \be\label{equxinta}\aligned \partial_x\uupk(x+h,t)-\partial_x\uupk(x,t)\hspace{100pt}\\ \\=\int\limits_\RR [\partial_xG_\eps(x+h-y,t-2\tau)-\partial_xG_\eps(x-y,t-2\tau)]\uupk(y,2\tau)dy\\-\int\limits_{2\tau}^t\int\limits_\RR [\partial_x\Geps(x+h-y,t-s)-\partial_x\Geps(x-y,t-s)]
                f(\uupk(y,s))_{y}dyds.\endaligned\ee
                The estimate ~\eqref{eqGepsxxho} now yields
                 \be \aligned
               \sup\limits_{k=1,2,\ldots} |\partial_{x}\uupk(x+h,t)-\partial_{x}\uupk(x,t)|\leq \hspace{150pt}\\ C\Upsilon(\tau)\eps^{-\frac12(1+\alpha)}(t-2\tau)^{-\frac12(1+\alpha)}|h|^\alpha\\+C_1\Upsilon_1(\tau)\eps^{-\frac12(1+\alpha)}|h|^\alpha\int\limits_{2\tau}^t(t-s)^{-\frac12(1+\alpha)}ds,\quad t>2\tau.
                \endaligned\ee
                In particular, we obtain the  H\"{o}lder continuity property of the first-order derivative with respect to $x,$
                \be\label{eqHoldux}\aligned
               \sup\limits_{k=1,2,\ldots} |\partial_{x}\uupk(x+h,t)-\partial_{x}\uupk(x,t)|\hspace{100pt}\\\leq [ C\Upsilon(\tau)\tau^{-\frac12(1+\alpha)}+C_1\Upsilon_1(\tau)\tau^{\frac12(1-\alpha)}]\eps^{-\frac12(1+\alpha)}|h|^\alpha,\\ \quad x\in \RR,\,\,t>3\tau,\,\,0<\alpha<1.
                \endaligned\ee

                 Equation ~\eqref{generaleqk} can now be written in the half-plane $t\geq 3\tau$ as

       \begin{equation}\label{generaleqka}
      \uupk_t -\eps \uupk_{xx}= -f(\uupk)_x=a_k(x,t),\quad (x,t)\in\RR\times [3\tau,\infty) \quad
   \eps>0,
   \end{equation}
        where the sequence of continuous functions $\set{a_k(x,t)},\,\,(x,t)\in\RR\times [3\tau,\infty)$ is uniformly bounded and uniformly H\"{o}lder continuous with respect to $x.$ In fact, recalling that $f'(u)$ is assumed to be locally H\"{o}lder continuous (say with exponent $\gamma>0$), we have, for $(x,t)\in\RR\times [3\tau,\infty),$
        \be\aligned
          |a_k(x+h,t)- a_k(x,t)|\leq |f'(\uupk(x+h,t))-f'(\uupk(x,t))|\Upsilon_1(\tau)\\\\+C_1|\partial_x\uupk(x+h,t)-\partial_x\uupk(x,t)|\hspace{70pt}\\\\
          \leq C\Upsilon_1(\tau)^{1+\gamma}|h|^\gamma\hspace{150pt}\\+C_1[ C\Upsilon(\tau)\tau^{-\frac12(1+\alpha)}+C_1\Upsilon_1(\tau)\tau^{\frac12(1-\alpha)}]\eps^{-\frac12(1+\alpha)}|h|^\alpha.
        \endaligned\ee
        that can be written, for some $\delta>0,$
                \be\label{eqakholder}\sup\limits_{k=1,2,\ldots}|a_k(x+h,t)-a_k(x,t)|\leq L|h|^\delta, (x,t)\in \RR\times [3\tau,\infty),\ee
                where $L>0$ depends on $\tau,\eps.$

         The boundedness and uniform H\"{o}lder continuity of the right-hand side terms in ~\eqref{generaleqka} (with respect to $x$) enables us to establish the uniform boundedness of the sequence $\set{\partial_t\uupk(x,t)}_{k=1}^\infty$   (see ~\cite[Ch. IV, Sec.1]{LSU} for the local version), using the Duhamel representation.

         Indeed, writing


 \be\aligned \uupk(x,t)=\int\limits_\RR G_\eps(x-y,t-3\tau)\uupk(y,3\tau)dy\\+\int_{3\tau}^{t}\int\limits_\RR G_\eps(x-y,t-s)
                a_k(y,s)  dyds,\endaligned\ee

        a formal differentiation with respect to $t$ yields,

                 \be\label{equxxint}\aligned \partial_t\uupk(x,t)=\int\limits_\RR\partial_t G_\eps(x-y,t-3\tau)\uupk(y,3\tau)dy\\+a_k(x,t)+\int\limits_{3\tau}^{t}\int\limits_\RR \partial_t\Geps(x-y,t-s)
                [a_k(y,s)-a_k(x,s)]  dyds,\endaligned\ee
               where we have used
                $$\lim\limits_{\eta\to 0}\int\limits_\RR G_\eps(x-y,\eta)a_k(y,t-\eta)dy=a_k(x,t).$$
                Thus  we need only  establish the boundedness  of the spacetime  integral of
              $$I(y,s)= \partial_t\Geps(x-y,t-s)
                [a_k(y,s)-a_k(x,s)] ,$$
                in $\RR\times [3\tau,t].$
                We have
                $$\partial_t G_\eps(x,t)=(4\pi\eps t)^{-\frac12}\Big\{\frac{x^2}{4\eps t^2}-\frac{1}{2t}\Big\}e^{-\frac{x^2}{4\eps t}},$$
                so in view of the estimate ~\eqref{eqakholder} we obtain
                , with $z=\frac{x-y}{2\sqrt{\eps (t-s)}},$
                 $$\int_{3\tau}^{t}\int\limits_\RR|I(y,s)|dyds\leq CL\int_{3\tau}^{t}(t-s)^{-1+\frac{\delta}{2}}ds\int\limits_\RR[z^2+1]e^{-z^2}|z|^\delta dz .$$
                From ~\eqref{equxxint} we now infer that, for $t\in[4\tau,5\tau],$
                $$
               \sup\limits_{k=1,2,\ldots}| \partial_t\uupk(x,t)|<\infty,\quad (x,t)\in\RR\times [4\tau,5\tau].$$
               Since the bound depends only on $\tau$ (and the initial mass $M$), we can proceed by $\tau-$steps to get
                                   the uniform boundedness of the sequence of time-derivatives $\set{\partial_t\uupk(x,t)}_{k=1}^\infty,\,\,(x,t)\in\RR\times [4\tau,\infty):$
               \be\label{equxxintaa}
               \sup\limits_{k=1,2,\ldots}| \partial_t\uupk(x,t)|<\infty,\quad (x,t)\in\RR\times [4\tau,\infty).
               \ee
                        The uniform boundedness of the sequence of second-order spatial derivatives $\set{\uupk_{xx}}_{k=1}^\infty$ now follows from Equation ~\eqref{generaleqka}.

                   The uniform H\"{o}lder continuity of the set of spatial derivatives $\set{\uupk_x}_{k=1}^\infty$ with respect to $t$ follows from the   uniform H\"{o}lder continuity with respect to $x$ by a well-known argument ~\cite[Chapter II, Section 3]{LSU}. We give here the details, since we need to verify that this continuity is uniform in the full half-plane $\RR\times[4\tau,\infty).$

                   Pick $t_2>t_1>4\tau,\,\,x_1<x_2$ and define
                   \be\label{eqdefJk} J^{(k)}(x_1,x_2;t)=\int\limits_{x_1}^{x_2}[\uupk_x(s,t)-\uupk_x(x_1,t_1)]ds.\ee
                   Clearly
                   \be\label{eqJk4difference}\aligned J^{(k)}(x_1,x_2;t_2)-J^{(k)}(x_1,x_2;t_1)\hspace{100pt}\\
                   =\uupk(x_2,t_2)-\uupk(x_1,t_2)-\uupk(x_2,t_1)+\uupk(x_1,t_1).\endaligned\ee

                   The uniform boundedness of $\set{\uupk_t}_{k=1}^\infty$ ~\eqref{equxxintaa} entails, in view of ~\eqref{eqJk4difference}
                   \be\label{eqJ1J2diff}|J^{(k)}(x_1,x_2;t_2)-J^{(k)}(x_1,x_2;t_1)|\leq A|t_2-t_1|,\ee
                   where $A>0$ depends on $\tau,\,\eps,$ but not on $k,\,x_1,\,x_2,\,t_1,\,t_2.$

                   From ~\eqref{eqdefJk} we derive two facts, where we use $A_1>0$ as a generic constant depending on $\tau,\,\eps,$ but not on $k,\,x_1,\,x_2,\,t_1,\,t_2.$
                   \begin{itemize}
                  \item  The uniform  H\"{o}lder continuity of the derivatives $\set{\uupk_x}_{k=1}^\infty$ ~\eqref{eqHoldux} yields
                   $$|J^{(k)}(x_1,x_2;t_1)|\leq A_1|x_2-x_1|^{1+\alpha}.$$

                   \item By the mean value theorem
                   $$J^{(k)}(x_1,x_2;t_2)=[\uupk_x(\sigma,t_2)-\uupk_x(x_1,t_1)](x_2-x_1),\quad \sigma\in[x_1,x_2],$$

                   so again by the  uniform H\"{o}lder continuity of the derivatives $\set{\uupk_x}_{k=1}^\infty$ with respect to $x,$
                   $$\aligned|J^{(k)}(x_1,x_2;t_2)|\geq \Big[|\uupk_x(x_1,t_2)-\uupk_x(x_1,t_1)|-|\uupk_x(\sigma,t_2)-\uupk_x(x_1,t_2)|\Big]|x_2-x_1|\\\geq \Big[ |\uupk_x(x_1,t_2)-\uupk_x(x_1,t_1)|-A_1|x_2-x_1|^{\alpha}\Big]|x_2-x_1|.\endaligned$$
                   \end{itemize}
                   Incorporating these estimates in ~\eqref{eqJ1J2diff} yields
                   $$\aligned|\uupk_x(x_1,t_2)-\uupk_x(x_1,t_1)|\leq \frac{|J^{(k)}(x_1,x_2;t_2)|}{|x_2-x_1|}+A_1|x_2-x_1|^{\alpha}\\ \\
                   \leq \frac{|J^{(k)}(x_1,x_2;t_2)-J^{(k)}(x_1,x_2;t_1)|+|J^{(k)}(x_1,x_2;t_1)|}{|x_2-x_1|}
                   +A_1|x_2-x_1|^{\alpha}\\
                   \leq A\frac{|t_2-t_1|}{|x_2-x_1|}+2A_1|x_2-x_1|^{\alpha}.\endaligned$$
                   Selecting $x_2$ such that $A\frac{|t_2-t_1|}{|x_2-x_1|}=2A_1|x_2-x_1|^{\alpha},$ we obtain the uniform H\"{o}lder continuity of the derivatives $\uupk_{x}$ with respect to $t,$
                   \be |\uupk_x(x,t_2)-\uupk_x(x,t_1)|\leq 2 A |t_2-t_1|^{\frac{\alpha}{1+\alpha}},\quad x\in \RR,\,\,t_2>t_1>4\tau.\ee
                \emph{End of Proof of Claim ~\ref{ClaimHoldcont}}

                \end{proof}

              We now turn back to the proof of Theorem ~\ref{theoremuupkestimates}.

              From  Claim ~\ref{ClaimHoldcont} we infer that the sequence $\set{a_k(x,t)}_{k=1}^\infty$ in Equation ~\eqref{generaleqka} is uniformly  H\"{o}lder continuous, with respect to $(x,t),$ in the half-plane $(x,t)\in \RR\times [4\tau,\infty).$

               We are now able to use the classical Schauder estimates for the heat equation  ~\cite[Chapter 4, Section 2]{LSU} or ~\cite[Chapter 4]{lieberman}, in order to obtain the uniform  H\"{o}lder continuity of the sequences $$\set{\uupk_t(x,t)}_{k=1}^\infty ,\,\,\set{\uupk_{xx}(x,t)}_{k=1}^\infty. $$




       By a diagonal process we can therefore extract a subsequence converging to a function $u(x,t),\,(x,t)\in \RR\times\RP.$ The convergence is uniform, together with all relevant derivatives, in every compact domain $P\Subset \RR\times\RP.$
       It follows that $u(x,t)$ is a classical solution.

       Applying Fatou's lemma to the sequence of nonnegative pointwise converging functions (for every fixed $t>0$) $\set{\uupk(x,t)}_{k=1}^\infty$ and noting ~\eqref{equupklessM} we obtain
       \be\label{eqlessL1uxt} \int_{\RR}u(x,t)dx\leq M=\|u_0\|_{\Mcal}.
       \ee
       It follows that  $u(\cdot,t)\in L^1(\RR)\cap L^\infty(\RR)$ for every $t>0,$ so that all the properties mentioned in Section ~\ref{secgeneral} can be applied. In particular , combining ~\eqref{eqfixM} with ~\eqref{eqlessL1uxt}
       \be\label{eqconstL1uxt}\int_{\RR}u(x,t)dx=const\leq M=\|u_0\|_{\Mcal},\quad t>0.
       \ee

        We now establish the convergence to the initial data in the sense of measures, as in ~\eqref{eqconvtodelta}.

       In the case $f(u)=|u|^q,\,q>1,$ and $u_0(x)=M\delta_0,$ such a proof is given in ~\cite[Section 4]{escobedo}.

       Let $\zeta(x)\in C^\infty_0(\RR). $  For every $k=1,2,\ldots$ we have, by integrating Equation ~\eqref{generaleqk},
         \be\label{eqintformukxt}\aligned\int\limits_\RR\uupk(x,t)\zeta(x)dx-\int\limits_\RR u_0^{(k)}(x)\zeta(x)dx\\=\eps\int\limits_\RR\int\limits_0^t\uupk(x,s)\zeta_{xx}(x)dxds
         +\int\limits_\RR\int\limits_0^tf(\uupk(x,s))\zeta_{x}(x)dxds.\endaligned\ee

         By the contraction estimate ~\eqref{equupklessM} we have
         $$\Big|\int\limits_\RR\int\limits_0^t\uupk(x,s)\zeta_{xx}(x)dxds\Big|\leq M\|\zeta_{xx}\|_\infty t.$$
         Thus
         \be\label{eqconvinitdata}
         \Big|\int\limits_\RR\uupk(x,t)\zeta(x)dx-\int\limits_\RR u_0^{(k)}(x)\zeta(x)dx\Big|\leq M\|\zeta_{xx}\|_\infty t+\int\limits_\RR\int\limits_0^t\Big|f(\uupk(x,s))\zeta_{x}(x)\Big|dxds.
         \ee

        Let us show that, uniformly in $k,$
        \be\label{eqvanishtodoublext}\lim\limits_{t\to 0}\Big|\int\limits_\RR\int\limits_0^tf(\uupk(x,s))\zeta_{x}(x)dxds\Big|=0.
        \ee
%
%
%

      In order to prove it,  the growth assumption ~\eqref{eqassumpgrowthpf} is invoked. \textbf{ Note that it is used here for the first time in the proof.}

      In view of ~\eqref{equupklessM} we  have
       $$ \|\uupk(\cdot,t)\|_1\leq M,\quad k=1,2,\ldots.$$
           Furthermore, the estimate ~\eqref{eqestimateuconserv} yields
           \be\label{eqestuupklinfpcond}
      \|\uupk(\cdot,t)\|_\infty\leq M^{\frac{1}{p}} \,
(at)^{-\frac{1}{p}} ,
      \ee
     so that
       $$\aligned
         \int\limits_\RR\Big|f(\uupk(x,s))\zeta_{x}(x)\Big|dx\leq C\|\zeta_{x}\|_\infty\int\limits_\RR\Big[\uupk(x,s)(1+\uupk(x,s)^{p-1})\Big]dx\\
         \leq CM\|\zeta_{x}\|_\infty\Big[1+M^{\frac{p-1}{p}}(as)^{-\frac{p-1}{p}}\Big],
       \endaligned$$
       from which ~\eqref{eqvanishtodoublext} follows.

       Thus, passing to the limit as $k\to\infty$ in ~\eqref{eqconvinitdata} yields,
       \be \lim\limits_{t\to 0}\Big|\int\limits_\RR u(x,t)\zeta(x)dx-\int\limits_\RR \zeta(x)du_0(x)\Big|=0,\ee
       so that the convergence in measure to the initial data is established.

       A well-known fact about weak convergence of functionals entails
       \be
         \|u_0\|_{\Mcal}\leq \liminf\limits_{t\to 0}\int_{\RR}u(x,t)dx,
              \ee
      and in conjunction with ~\eqref{eqconstL1uxt} we get ~\eqref{eqmeqintuxt}.

       The estimate ~\eqref{eqestlinfutpcond} now follows from ~\eqref{eqestuupklinfpcond}.

       \underline{Finally, we address the uniqueness of the solution.} Let $v(x,t)$ be another classical solution for the same initial data. 

      Let $U(x,t),\,V(x,t)$ be as in ~\eqref{eqdefUV}.

       The functions $U,\,V$ are classical solutions to the viscous Hamilton-Jacobi equation
       \be\label{eqWfW}
        W_t+f(W_x)=\eps W_{xx},\quad x\in\RR,\,\,t>0.
       \ee
       From the convergence in measure ~\eqref{eqconvtodelta} and ~\eqref{eqconvtodeltav} it follows that
       \be\label{eqUVxto0} \lim\limits_{t\to 0}U(x,t)=\lim\limits_{t\to 0}V(x,t)=U_0(x),\quad a.e. x\in\RR,
       \ee
          where
          $$U_0(x)=\int\limits_{-\infty}^xdu_0,$$
          is a monotone nondecreasing function.

          Since $u_0$ is compactly supported, the convergence in measure also implies that for every small $\delta>0$ there exist $r>0,\,\theta>0,$ such that
          \be\label{eqsmallUV}
         \begin{cases} \max (U(x,t),\,V(x,t))\leq \delta,\quad x<-r,\,0<t<\theta,\\\min (U(x,t),\,V(x,t)) \geq 1-\delta,\quad x>r,\,0<t<\theta.
          \end{cases}
          \ee

%
          Let $Z(x,t)=U(x,t)-V(x,t).$  Noting ~\eqref{eqUVxto0} it follows by Helly's theorem ~\cite[Section VIII.4]{natanson} that there exists a decreasing subsequence $\set{t_k\downarrow 0}$ such that
           $$\lim\limits_{k\to\infty}Z(x,t_k)=0,\quad x\in\RR.$$
           By assumption $Z(x,t)$ is continuous in $\RR\times [0,\eta]$ hence $Z(x,0)\equiv 0.$

            The difference $f(U_x)-f(V_x)$ can be written as
          \be\label{eqdiffUxVx}
          f(U_x)-f(V_x)=A(x,t)Z_x(x,t),
          \ee
          where
            $$A(x,t)=\int\limits_0^1f'(\lambda U_x+(1-\lambda)V_x)d\lambda.$$
            Thus $Z(x,t)$ satisfies the linear parabolic equation
            \be\label{eqZpar}
              Z_t+A(x,t)Z_x=\eps Z_{xx},\quad x\in\RR,\,\,t>0.
            \ee
            On account of  assumption ~\eqref{eqestlinfvtpcond} the coefficient $A(x,t)$ is bounded in every strip of the form $\RR\times[\tau,\eta]$ for $0<\tau<\eta.$

            Let $\set{t_k\downarrow 0}$ be a sequence as above and let
            $$M_k=\sup\set{|Z(x,t)|,\quad x\in\RR,\,\,t_k\leq t\leq\eta}.$$

            The maximum principle ~\cite[Section 3.2]{protter} implies that there exists a point $x_k\in\RR$ such that $Z(x_k,t_k)=M_k.$ In view of ~\eqref{eqsmallUV} we may assume that $x_k\in [-r,r],\,\,k=1,2,...$ Hence there is a subsequence (without changing notation) such that $\lim\limits_{k\to\infty}x_k=\bar{x}\in [-r,r]$ and by the assumed continuity of $Z(x,t)$
               $$\lim\limits_{k\to\infty}M_k=Z(\bar{x},0)=0.$$
               Since $\set{M_k}_{k=1}^\infty$ is non-decreasing, we must have $M_k=0,\,k=1,2\ldots$

         \end{proof}

   \section{THE SPECIAL  CASE $f(u)=u^p$}\label{secutoq}
   In this section we consider the special case of a ``power-law'' flux:

   \begin{equation}
   \label{burgers}
   u_t +(u^p)_x=\varepsilon u_{xx},\quad x\in\RR,\quad
   p>1,\quad\eps>0,
   \end{equation}
     subject to the nonnegative measure initial condition
     \begin{equation}
     \label{initial}
   0\leq  u(x,0)=u_0\in  \Mcal_+..
     \end{equation}
     We assume that the measure $u_0$ is compactly supported.

     The flux certainly satisfies the hypotheses imposed in Theorem ~\ref{theoremuupkestimates},  so all the conclusions of the theorem are valid here. In particular, since it clearly satisfies the $p-$condition  , it satisfies the decay estimate ~\eqref{eqestlinfutpcond}.

     We summarize the decay estimates in the following theorem.
%
%
%
       \begin{thm}
       \label{basicthm}
       Let $u(x,t)$ be the solution to ~\eqref{burgers}. Then:
       \begin{enumerate}
       \item With some constant $C>0,$ {\it {independent}} of  \,\,$\varepsilon>0,$
       \begin{equation}
       \label{linfinity}
       \|u(\cdot,t)\|_\infty\leq C\|u_0\|_\Mcal^{\frac1p}t^{-\frac1p},\quad t>0.
       \end{equation}
       \item With some constant $C>0,$ {\it {independent}} of  \,\,$\varepsilon>0,$
        \begin{equation}
       \label{l2}
       \|u(\cdot,t)\|_2\leq C\|u_0\|_\Mcal^{\frac{p+1}{2p}}t^{-\frac{1}{2p}},\quad t>0.
       \end{equation}

       \end{enumerate}
       \end{thm}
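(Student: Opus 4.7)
\medskip

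\noindent\textbf{Proof proposal for Theorem \ref{basicthm}.} The plan is to observe that both estimates are immediate consequences of results already established for general fluxes satisfying the $p$-condition, applied to the special flux $f(u)=u^p$.

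For part (1), the flux $f(u)=u^p$ satisfies the $p$-condition by the explicit construction recalled in \eqref{special} (taking $\Phim(r)=(r+\eta^2)^{p/2}-\eta^p$), and it clearly satisfies the growth condition \eqref{eqassumpgrowthpfder}. Since the initial datum $u_0\in\Mcal_+$ is compactly supported, Theorem \ref{theoremuupkestimates} applies and produces a nonnegative classical solution satisfying precisely the bound \eqref{eqestlinfutpcond}, which is exactly \eqref{linfinity}. The constant there is $\varepsilon$-independent because the underlying Hamilton-Jacobi estimate \eqref{vgradx} of Theorem \ref{boundedviscHJ} is $\varepsilon$-independent (the constant $a>0$ comes from the $p$-condition, not from $\varepsilon$).

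For part (2), I would interpolate between $L^1$ and $L^\infty$. By \eqref{eqmeqintuxt} we have the conservation of mass
\begin{equation*}
\|u(\cdot,t)\|_1 = \|u_0\|_{\Mcal}, \quad t>0,
\end{equation*}
and from part (1) we already have $\|u(\cdot,t)\|_\infty \leq C\|u_0\|_{\Mcal}^{1/p} t^{-1/p}$. The standard interpolation inequality gives
\begin{equation*}
\|u(\cdot,t)\|_2^2 \leq \|u(\cdot,t)\|_1 \cdot \|u(\cdot,t)\|_\infty \leq C\|u_0\|_{\Mcal}^{1+\frac{1}{p}}\, t^{-\frac{1}{p}} = C\|u_0\|_{\Mcal}^{\frac{p+1}{p}}\, t^{-\frac{1}{p}},
\end{equation*}
and taking square roots yields \eqref{l2} with an $\varepsilon$-independent constant.

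There is no serious obstacle here: both parts reduce to invoking previously proved results. If one wanted to be pedantic about the compact-support assumption on $u_0$ in Theorem \ref{theoremuupkestimates} versus the statement of Theorem \ref{basicthm} (which does not explicitly reiterate it), one would note that the section introducing \eqref{burgers}-\eqref{initial} does impose compact support on $u_0$, so both estimates apply as stated. The only mild care needed is to observe that the $\varepsilon$-independence in part (1) propagates through the interpolation step in part (2), which is automatic since the $L^1$ bound is purely mass-conservation and the $L^\infty$ bound is $\varepsilon$-independent by design.
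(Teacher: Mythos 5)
Your proposal is correct and follows essentially the same route as the paper: part (1) is read off from the decay estimate \eqref{eqestlinfutpcond} of Theorem \ref{theoremuupkestimates} (the $p$-condition being satisfied by $f(u)=u^p$ via \eqref{special}), and part (2) is the interpolation $\|u\|_2^2\leq\|u\|_1\|u\|_\infty$ combined with $\|u(\cdot,t)\|_1\leq\|u_0\|_\Mcal$. The only cosmetic difference is that you invoke the exact mass identity \eqref{eqmeqintuxt} where the paper cites the contraction property, which changes nothing in the computation.
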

       \begin{proof}

      As already noted, the estimate ~\eqref{linfinity} is just the decay estimate ~\eqref{eqestlinfutpcond}.

       The estimate ~\eqref{l2} is obtained by interpolating ~\eqref{linfinity} with
       the contraction property $\|u(\cdot,t)\|_1\leq\|u_0\|_\Mcal.$

      \end{proof}
      \begin{rem} The estimate ~\eqref{linfinity} is proved in ~\cite[Lemma 1.2]{escobedo}, for $1<p<2,$ where the initial data is a point-source ~\eqref{eqmdeltainit}. However, as was seen above, the validity of this estimate also for $p\geq 2$ was useful in studying the behavior of the solution near the initial data.

      For $p\geq 2$ one has the estimate ~\eqref{carlenestn1}, which gives a faster decay as $t\to\infty,$ but depends on $\eps.$
      \end{rem}
      \begin{rem}
      (a) \,It is interesting to compare the $L^2$ estimate ~\eqref{l2} (for the Equation ~\eqref{burgers})
      to the "dispersive
      estimate" ~\cite[Section 1.1]{ds2},
      $$\|u(\cdot,t)\|_2\leq \alpha(\eps)\|u_0\|_1 t^{-\frac14},\quad t>0.$$
       The time decay $t^{-\frac14}$ is identical for the case $p=2$ but is different otherwise.
       The dependence on $\|u_0\|_1$ is different.
       Also note that the constant $C>0$ in ~\eqref{l2} is independent of $\eps>0.$ We
       note, on the other hand, that the "dispersive estimate" is independent of the
       nonlinear term (which is integrated out) and can therefore be applied in other
       situations (see its derivation for the vorticity in two-dimensional
       Navier-Stokes equations in ~\cite[Section 3]{mba}).

       (b)\, The rate of decay (in time) given by ~\eqref{l2} was first derived by
       Schonbek , in the multi-dimensional case ~\cite{sc}. However, the dependence on
       $u_0$ is different, as well as the fact that here the coefficient $C$ is
       independent of $\eps$ (see ~\eqref{l2in} below).

       (c)\, In the case of the viscous Burgers equation ( $p=2$) sharp
       constants for both ~\eqref{linfinity} and ~\eqref{l2} were given in
       ~\cite[Theorem 1]{loss}. The dependence there on $\|u_0\|_1$ is {\emph linear},
       as in the case of the heat equation. However, once again, the coefficients
       depend on $\eps.$

      \end{rem}
          The fact that the constants in ~\eqref{linfinity}-\eqref{l2} are independent of
       $\eps>0$ yields immediately the following result.
       \begin{thm}
       \label{corind}
        Consider the (inviscid) conservation law
   \begin{equation}
   \label{burgersin}
   u_t +(|u|^q)_x=0,\quad x\in\RR,\quad q>1,
   \end{equation}
     subject to the initial condition
     \begin{equation}
     \label{initialin}
     u(x,0)=u_0(x)\in L^1( \mathbb{R}).
     \end{equation}
      Then, with some constant $C>0,$

       \begin{equation}
       \label{linfinityin}
       \|u(\cdot,t)\|_\infty\leq C\|u_0\|_1^{\frac1q}t^{-\frac1q},\quad t>0,
       \end{equation}

        \begin{equation}
       \label{l2in}
       \|u(\cdot,t)\|_2\leq C\|u_0\|_1^{\frac{q+1}{2q}}t^{-\frac{1}{2q}},\quad t>0.
       \end{equation}

       \end{thm}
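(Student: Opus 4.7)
The plan is to obtain the inviscid estimates \eqref{linfinityin}--\eqref{l2in} as vanishing-viscosity limits of the viscous bounds in Theorem \ref{basicthm}, exploiting the key fact that the constant $C$ there is independent of $\eps$.

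First consider the case $u_0\ge 0$. Approximate $u_0$ in $L^1$ by nonnegative compactly supported test functions $u_0^{(n)}$ with $\|u_0^{(n)}\|_1\le\|u_0\|_1$. For each $\eps>0$, let $u^{\eps,n}$ denote the viscous solution of \eqref{burgers} with initial data $u_0^{(n)}$; by the maximum principle it is nonnegative, so $|u^{\eps,n}|^q=(u^{\eps,n})^q$, and Theorem \ref{basicthm} applies to give
$$\|u^{\eps,n}(\cdot,t)\|_\infty\le C\|u_0\|_1^{1/q}t^{-1/q},\qquad \|u^{\eps,n}(\cdot,t)\|_2\le C\|u_0\|_1^{(q+1)/(2q)}t^{-1/(2q)},$$
with constants independent of both $\eps$ and $n$. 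Invoking Kruzhkov's vanishing-viscosity theory, for each fixed $n$ the limit $\lim_{\eps\to 0}u^{\eps,n}=u^{(n)}$ exists as the unique entropy solution of \eqref{burgersin} with data $u_0^{(n)}$; by the $L^1$ contraction, $u^{(n)}\to u$ in $L^1$ as $n\to\infty$, where $u$ is the entropy solution with data $u_0$. The two estimates descend to $u(\cdot,t)$ by extracting a.e.\ convergent subsequences and applying pointwise passage to the limit for $L^\infty$ and Fatou's lemma for $L^2$.

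For general signed $u_0\in L^1(\RR)$, observe that equation \eqref{burgersin} is invariant under $u\mapsto-u$. Writing $u_0=u_0^+-u_0^-$, let $v$ and $w$ be the entropy solutions with initial data $u_0^+$ and $-u_0^-$ respectively; applying the nonnegative case to $v$ and to $-w$ yields $\|v(\cdot,t)\|_\infty,\|w(\cdot,t)\|_\infty\le C\|u_0\|_1^{1/q}t^{-1/q}$. Since $-u_0^-\le u_0\le u_0^+$, the comparison principle for entropy solutions gives $w\le u\le v$, transferring the $L^\infty$ bound to $u$. The $L^2$ estimate \eqref{l2in} then follows by interpolating this $L^\infty$ bound with the contraction $\|u(\cdot,t)\|_1\le\|u_0\|_1$ (which is the inviscid analogue of Lemma \ref{lemcontract}, obtained in the limit $\eps\to 0$).

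The main obstacle is conceptual rather than technical: one must justify that the $\eps$-independent a priori bounds from Theorem \ref{basicthm}, together with the $L^1$ contraction, provide enough compactness to extract an inviscid limit and identify it with the unique entropy solution. Once this is granted via the well-developed Kruzhkov framework, the estimates themselves transfer without effort, precisely because their constants never depended on $\eps$.
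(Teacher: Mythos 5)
Your proposal follows essentially the same route as the paper: the whole point is that the constants in Theorem \ref{basicthm} are independent of $\eps$, so one passes to the vanishing-viscosity (entropy) limit and transfers the bounds, using pointwise a.e.\ convergence for the $L^\infty$ estimate and Fatou/dominated convergence for the $L^2$ estimate. The paper's proof is exactly this (citing the viscous-approximation theory in \cite{godlewski}), and it does not bother with signed data, since $u_0\geq 0$ is in force from Section \ref{secLsup} onward. One small correction to your extra step for signed data: equation \eqref{burgersin} is \emph{not} invariant under $u\mapsto -u$ (that changes the sign of the flux term); the correct symmetry is $(x,u)\mapsto(-x,-u)$, i.e.\ $\tilde w(x,t)=-w(-x,t)$ solves \eqref{burgersin} with nonnegative data $u_0^-(-x)$, and since this reflection preserves all the norms involved, your comparison argument goes through after this one-line repair.
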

        \begin{proof}
        Denoting by $\ueps$ the solution to ~\eqref{burgers}, we know from the theory of viscous approximations to hyperbolic conservation laws ~\cite{godlewski} that $\ueps\to u$ pointwise
        for a.e.
        $t>0.$ Therefore ~\eqref{linfinityin} follows from ~\eqref{linfinity}. In
        particular, the set $\set{\ueps(\cdot,t)}_\eps$ is uniformly bounded (for a.e.
        $t$), so that by the dominated convergence theorem
        $$\int\limits_{-N}^N|u(x,t)|^2dx\leq C^2 \|u_0\|_1^{\frac{q+1}{q}}t^{-\frac1q},\quad
        t>0,$$
        and ~\eqref{l2in} follows by letting $N\to\infty.$

        \end{proof}
       \vspace{.2in}
\appendix

\end{document}